\documentclass[11pt]{article}

\usepackage[letterpaper,centering,left=1.25in,right=1.25in,top=1in,bottom=1in]{geometry}

\usepackage{datetime}

\usepackage{eucal}

\usepackage[all]{xy}

\usepackage{mathptmx}

\usepackage{amssymb, amsmath, amsthm}

\parindent0pt 
\parskip5.5pt

\sloppy

\setcounter{secnumdepth}{3}

\lefthyphenmin=3
\righthyphenmin=3

\clubpenalty=10000
\widowpenalty=10000

\numberwithin{equation}{section}

\flushbottom

%%%

\usepackage{todonotes}

%%%

\usepackage[pdftex]{hyperref}

\hypersetup{
  colorlinks = true,
  linkcolor = black,
  urlcolor = blue,
  citecolor = black,
}

\let\OLDthebibliography\thebibliography
\renewcommand\thebibliography[1]{
  \OLDthebibliography{#1}
  \setlength{\parskip}{2pt}
  \setlength{\itemsep}{2pt plus 2pt}
  \small
}

%%%

\renewcommand{\phi}{\varphi}

\newcommand{\bfA}{\mathbf{A}}
\newcommand{\bfB}{\mathbf{B}}
\newcommand{\bfC}{\mathbf{C}}
\newcommand{\bfD}{\mathbf{D}}
\newcommand{\bfE}{\mathbf{E}}
\newcommand{\bfF}{\mathbf{F}}

\newcommand{\bfM}{\mathbf{M}}
\newcommand{\bfS}{\mathbf{S}}

\newcommand{\bbN}{\mathbb{N}}
\newcommand{\bbQ}{\mathbb{Q}}
\newcommand{\bbS}{\mathbb{S}}

\newcommand{\bbZ}{\mathbb{Z}}

\newcommand{\op}{\mathrm{op}}

\newcommand{\rmB}{\mathrm{B}}
\newcommand{\rmC}{\mathrm{C}}

\newcommand{\rmF}{\mathrm{F}}

\newcommand{\rmH}{\mathrm{H}}

\newcommand{\rmK}{\mathrm{K}}
\newcommand{\rmL}{\mathrm{L}}

\newcommand{\rmS}{\mathrm{S}}

\newcommand{\xto}{\xrightarrow}
\newcommand{\sma}{\wedge}

\DeclareMathOperator{\id}{id}

\DeclareMathOperator{\Aut}{Aut}
\DeclareMathOperator{\Pre}{Pre}
\DeclareMathOperator{\Spec}{Spec}

\newcommand{\Sets}{\mathrm{Sets}}
\newcommand{\Abel}{\mathrm{Abel}}
\newcommand{\Groups}{\mathrm{Groups}}

\newcommand{\Nil}{\mathrm{Nil}}

\newcommand{\Boole}{\mathrm{Boole}}
\newcommand{\Cantor}{\mathrm{Cantor}}

\newcommand{\Spectra}{\mathbf{Spectra}}

\newcommand{\modmulti}{\mathcal{M}\underline{1}}
\newcommand{\modmodules}{\mathbf{Mod}^{\modmulti}}

%%%

\usepackage{graphicx}

%%%

\newtheorem{theorem}{Theorem}[section]
\newtheorem{proposition}[theorem]{Proposition}
\newtheorem{corollary}[theorem]{Corollary}

\theoremstyle{definition}

\newtheorem{definition}[theorem]{Definition}
\newtheorem{example}[theorem]{Example}
\newtheorem{examples}[theorem]{Examples}
\newtheorem{remark}[theorem]{Remark}

\newtheorem{defn}[theorem]{Definition}

%%%

\makeatletter
\newcommand{\subjclass}[2][2020]{%
  \let\@oldtitle\@title%
  \gdef\@title{\@oldtitle\footnotetext{#1 \emph{Mathematics subject classification.} #2}}%
}
\newcommand{\keywords}[1]{%
  \let\@@oldtitle\@title%
  \gdef\@title{\@@oldtitle\footnotetext{\emph{Key words and phrases.} #1.}}%
}
\makeatother

\usepackage{titling}
\setlength{\droptitle}{-0.75in}

\title{\bf Generalizations of Loday's assembly maps \hbox{for Lawvere's algebraic theories}}

\author{Anna Marie Bohmann and Markus Szymik}

\newdateformat{mydate}{\monthname~\twodigit{\THEYEAR}}
\date{\mydate\today}
\subjclass{19D23 (Primary), 18F25, 18C10, 55P42 (secondary)}
\keywords{K-theory, Lawvere theories, assembly}
\begin{document}

\maketitle

%\renewcommand{\abstractname}{\vspace{-3\baselineskip}}
%\begin{abstract}
%\noindent
Loday's assembly maps approximate the K-theory of group rings by the K-theory of the coefficient ring and the corresponding homology of the group. We present a generalization that places both ingredients on the same footing. Building on Elmendorf--Mandell's multiplicativity results and our earlier work, we show that the K-theory of Lawvere theories is lax monoidal. This result makes it possible to present our theory in a user-friendly way without using higher categorical language. It also allows us to extend the idea to new contexts and set up a non-abelian interpolation scheme, raising novel questions. Numerous examples illustrate the scope of our extension. 
%\end{abstract}

%%%

\section{Introduction}
Assembly maps 
\begin{equation}\label{eq:introLoday}
\rmK(\bbZ)\wedge\Sigma^\infty_+(\rmB G)\longrightarrow\rmK(\bbZ G)
\end{equation}
for group rings are a central topic of study in algebraic~K-theory and related fields.  They were first defined by Loday~\cite{Loday} in his thesis, and subsequently developed by many others~\cite{Waldhausen,Quinn, Farrell+Jones,Weiss+Williams,Davis+Lueck}. We refer to~\cite{Hambleton+Pedersen,Sperber} for a comparison. Precursors in algebraic L-theory originated in the surgery classification of manifolds as documented in Quinn's thesis~\cite{Quinn:Thesis} and later reworkings by Ranicki~\cite{Ranicki} and Quinn~\cite{Quinn:Oberwolfach}. L\"uck's recent survey~\cite{Lueck} of assembly maps emphasizes their current use in computations and the conceptual reformulation of isomorphism conjectures such as the Novikov conjecture. Some of the fascination of this central player in~K- and~L-theory is due to the many guises in which the assembly map appears~\cite{Davis}: as a method for gluing local data into a global object, as the forgetting of control in controlled topology, as an operator index map, or simply as a way of investigating what happens when we try to separate the two~`variables'~$\bbZ$ and~$G$ in the target of~\eqref{eq:introLoday} to form the source, before the arrow takes the two pieces and (re)assembles them.

In this article, we offer a new perspective on assembly maps in algebraic K-theory, one that leads to several generalizations.  This perspective comes from constructing the three constituent spectra~$\rmK(\bbZ)$,~$\Sigma^\infty_+(\rmB G)$, and~$\rmK(\bbZ G)$ as part of the same framework: they are all algebraic~K-theory spectra~$\rmK(T)$ of suitable Lawvere theories~$T$, as defined in our earlier work~\cite{Bohmann+Szymik} and recalled below. 
As a first approximation, we can think of Lawvere theories as generalizations of rings. They originate from an abstract, categorical approach to universal algebra. They lend themselves to a wealth of compelling examples, vastly exceeding the theories of modules over rings, as we shall see. 
Using earlier work of Elmendorf and Mandell~(see~\cite{EM1} and~\cite{EM2}) refining Segal's~$\Gamma$--space construction, in~Theorem~\ref{thm:main} and Corollary \ref{laxmonoidalforlawvere} we show that the~K-theory of Lawvere theories admits the structure of a lax symmetric monoidal functor with respect to the Kronecker product, which is a generalization of the tensor products of rings. 

\begin{theorem}\label{thmABC:monoidal}
For each pair of Lawvere theories~$S$ and~$T$, there is a morphism
\[
\rmK(S)\wedge\rmK(T)\longrightarrow\rmK(S\otimes T)
\]
of spectra that is natural in~$S$ and~$T$ and that induces multiplication at the level of components.
\end{theorem}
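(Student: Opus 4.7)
The plan is to factor the assignment $T\mapsto\rmK(T)$ through the category of permutative categories and exploit the multiplicative refinement of Segal's machine due to Elmendorf and Mandell~\cite{EM1,EM2}. Recall from~\cite{Bohmann+Szymik} that $\rmK(T)$ is obtained by applying the K-theory functor for permutative categories to a permutative category built from finitely generated free $T$-models under direct sum; denote the latter $\calF(T)$. Because the Elmendorf--Mandell K-theory functor from permutative categories to spectra is lax symmetric monoidal, it suffices to produce, naturally in $S$ and $T$, a bilinear pairing
\[
\calF(S)\times\calF(T)\longrightarrow\calF(S\otimes T)
\]
in the multicategorical sense of Elmendorf--Mandell, and then to identify the induced map on components.

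The pairing is built directly from the universal property of the Kronecker product $S\otimes T$: an $S\otimes T$-model is precisely an object carrying an $S$-model structure and a $T$-model structure whose operations commute past one another. In particular, the free $S\otimes T$-model $F_{S\otimes T}(I\times J)$ on a product of finite sets inherits compatible $S$- and $T$-structures, so on objects we may send $(F_S(I),F_T(J))\mapsto F_{S\otimes T}(I\times J)$. The assignment extends to morphisms via the universal properties of the free models, and is functorial in the Lawvere theories in each slot. Distributivity of cartesian products over disjoint unions of finite sets supplies coherent natural isomorphisms $F_{S\otimes T}((I\sqcup I')\times J)\cong F_{S\otimes T}(I\times J)\oplus F_{S\otimes T}(I'\times J)$ and symmetrically in the second variable; these are exactly the bilinearity data that Elmendorf--Mandell demand. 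Feeding this pairing into their lax monoidal K-theory functor produces the asserted natural map $\rmK(S)\wedge\rmK(T)\to\rmK(S\otimes T)$ of spectra.

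The statement on $\pi_0$ then follows by unraveling definitions. The group $\pi_0\rmK(T)$ is the Grothendieck group of the monoid of isomorphism classes in $\calF(T)$, generated under addition by the class of $F_T(\{*\})$. The pairing above sends $(F_S(\{*\}),F_T(\{*\}))$ to $F_{S\otimes T}(\{*\}\times\{*\})\cong F_{S\otimes T}(\{*\})$ and is additive in each argument by the distributivity isomorphisms just mentioned. This agrees with the multiplication determined by the Kronecker product at the level of components.

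The principal obstacle is the strict verification that the coherence isomorphisms implementing bilinearity satisfy the axioms demanded by the Elmendorf--Mandell multicategory of permutative categories. I expect this to reduce to transparent set-theoretic identities involving finite cartesian products and disjoint unions once the pairing is formulated intrinsically via universal properties of Kronecker products, rather than in terms of a particular presentation of free models; the substance of the theorem is then packaged inside the Elmendorf--Mandell machine.
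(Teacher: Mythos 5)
Your proposal is correct and follows essentially the same route as the paper: both construct a bilinear functor on the permutative categories of finitely generated free models, sending the pair of free models on $m$ and $n$ generators to the free $S\otimes T$-model on $mn$ generators, with morphisms determined by the commutation of $S$- and $T$-operations in the Kronecker product and distributivity given by the arithmetic of finite sets, and then feed this into the Elmendorf--Mandell machine; the identification on $\pi_0$ is also argued the same way. The only detail worth adding is that one should note the bilinear functor restricts to the subgroupoids of isomorphisms, since that is where the K-theory spectra are actually defined.
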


It is an immediate consequence that the algebraic K-theory spectra of monoidal Lawvere theories are commutative~$\bbS$--algebras (see our Theorem~\ref{thm:commutative}). These monoidal Lawvere theories form the foundation on which Durov~\cite{Durov} based his approach to Arakelov geometry, which compactifies the prime spectrum~$\Spec(\bbZ)$ into a complete space~$\overline{\Spec(\bbZ)}$. Connes and Consani explain the relation between~$\overline{\Spec(\bbZ)}$ and Segal's~$\Gamma$--spaces in~\cite{Connes+Consani:1,Connes+Consani:2}.

The precise statement of the Elmendorf--Mandell result that we use is a little more involved than one might like. Permutative or symmetric monoidal categories as in~\cite{BO} only form a~`pseudo-monoidal category'~(see the discussion by Hyland and Power~\cite{HylandPower2002}). Therefore, we cannot formulate multiplicativity by saying~`K-theory is a lax symmetric monoidal functor from symmetric monoidal categories to spectra,' at least not without using higher-categorical concepts. One of the points of working with Lawvere theories, instead of symmetric monoidal categories, is that they are simultaneously comprehensive, sufficiently flexible, and strictly lower-categorical. When restricted to Lawvere theories, K-theory{\it~is} a lax symmetric monoidal functor in the usual, strict sense: this is what we show as Corollary~\ref{laxmonoidalforlawvere} to the more precise but technical Theorem~\ref{thm:lax}.

As a consequence, our version of multiplicativity can be understood and used with a working knowledge of the categories on the level of the early textbooks~\cite{Pareigis, Schubert, Mac Lane}. In particular, assembly works straightforwardly in our context: Theorem~\ref{thmABC:monoidal}, when specialized to the Lawvere theory~$S=\bbZ$ of abelian groups immediately produces assembly-style morphisms
\begin{equation}\label{eq:assT}
\rmK(\bbZ)\wedge\rmK(T)\longrightarrow\rmK(\bbZ\otimes T)
\end{equation}
for any Lawvere theory~$T$. Here the theory of abelian groups is given by the modules over the ring~$\bbZ$ of integers and~$\bbZ\otimes T$ is the theory of abelian group objects in~$T$, which is always given by modules over a ring~(see~\cite[Thm.~13.2]{Wraith} and the discussion in  Example~\ref{ex:linearization} below). These assembly maps can be equivalences, as  happens for Cantor algebras, where the rings in question are the Leavitt algebras~(see our Theorem~\ref{thm:Leavitt}, based on~\cite{Szymik+Wahl}) or they can fail to be even rationally injective, as happens for Boolean algebras~(see our Theorem~\ref{thm:Boole}, based on~\cite{Bohmann+Szymik}), where they are null-homotopic. Specializing further to the theory~$T$ of actions by a fixed discrete group~$G$, we recover Loday's assembly map~\eqref{eq:introLoday} for the group~$G$.

Our extension of the assembly map also allows us to consider the cases where~$T$ is the theory of non-abelian groups, and nilpotent groups in particular.   The assembly maps for the corresponding theories are either equivalences or not even rationally injective; see Theorems~\ref{thm:assembly_for_gr} and~\ref{thm:assembly_for_nil}, respectively.

Because both parameters in Theorem \ref{thmABC:monoidal} are Lawvere theories, we can easily extend the assembly map by changing the first parameter as well as the second. By moving away from the ``linear'' case of \eqref{eq:assT}, where the first parameter is~$\bbZ$,  we initiate the study of the non-abelian assembly maps
\[
\rmK(\Groups)\wedge\rmK(T)\longrightarrow\rmK(\Groups\otimes T),
\]
based on Galatius's computation~\cite{Galatius}, and the nilpotent interpolations
\[
\rmK(\Nil_c)\wedge\rmK(T)\longrightarrow\rmK(\Nil_c\otimes T)
\]
between that and~\eqref{eq:assT}, based on earlier work~\cite{Szymik:twisted, Szymik:rational} on the theories~$\Nil_c$ of nilpotent groups of a given class~$c$, letting~$c\to\infty$, in the spirit of `nilpotent mathematics'~(see~\cite{Shafarevich}).

In the following Section~\ref{sec:one}, we recall Lawvere theories and their Kronecker products, which generalize tensor products of rings~(Section~\ref{sec:theories}). To make this paper self-contained, we also recall the definition of their algebraic K-theory from our earlier work~\cite{Bohmann+Szymik} in Section~\ref{sec:K}. Section~\ref{sec:pairings} is the heart of the paper and contains the main results on multiplicativity.  Section~\ref{sec:monoids} discusses monoids in the category of Lawvere theories and their algebraic K-theory spectra, before we define our generalized assembly maps in Section~\ref{sec:Assembly} and present our computations for Cantor algebras~(Section~\ref{sec:Cantor}) and Boolean algebras~(Section~\ref{sec:Boole}). The relationship to Loday's classical assembly maps is explained in Section~\ref{sec:classical}, using group actions, and the final Section~\ref{sec:non-abelian} is devoted to the exploration of novel ground using nilpotent and other non-abelian groups.

%%%  

\section{Lawvere theories and their algebraic K-theory}\label{sec:one}

Lawvere theories are a fundamental tool for encoding algebraic structures, first introduced in Lawvere's thesis~\cite{Lawvere:PNAS} and now widely used throughout algebra, logic and related disciplines. We review the basic notions and our notation for Lawvere theories, using the same language as in the prequel~\cite{Bohmann+Szymik}. In addition, we also discuss Kronecker products. Some textbook references for Lawvere theories are~\cite{Pareigis, Schubert, Borceux, ARV}. Proposition~\ref{prop:prop} singles out a property of Lawvere theories that distinguishes them from the more general symmetric monoidal categories and spares us higher categories throughout.

We also recall the definition of the algebraic K-theory spectrum~$\rmK(T)$ of a Lawvere theory~$T$ from~\cite{Bohmann+Szymik}. Our primary approach is to view Lawvere theories as a special case of symmetric monoidal categories and apply classic constructions of K-theory for the latter.

\subsection{Lawvere theories}\label{sec:theories}

We choose a skeleton~$\bfE$ of the category of finite sets and~(all) maps between them. For each integer~\hbox{$r\geqslant0$} such a category has a unique object with precisely~$r$ elements, and there are no other objects. For the sake of explicitness, let us choose the model~\hbox{$\underline{r}=\{a\in\bbZ\,|\,1\leqslant a\leqslant r\}$} for such a set.~A set with~$r+s$ elements is the~(categorical) sum~(or co-product) of a set with~$r$ elements and a set with~$s$ elements.

\begin{definition}
A {\em Lawvere theory}~$T=(\bfF_T,\rmF_T)$ is a pair consisting of a small category~$\bfF_T$ together with a functor
\[
\rmF_T\colon\bfE\longrightarrow\bfF_T
\]
that is bijective on sets of objects and that preserves sums. The second condition means that the canonical map~\hbox{$\rmF_T(\underline{r})+\rmF_T(\underline{s})\to\rmF_T(\underline{r}+\underline{s})$} induced by the canonical injections is an isomorphism for all sets~$\underline{r}$ and~$\underline{s}$ in~$\bfE$. 
\end{definition}

The image of the set~$\underline{r}$ with~$r$ elements under the functor~$\rmF_T\colon\bfE\to\bfF_T$ will be written~$T_r$, so that the object~$T_r$ is the sum in the category~$\bfF_T$ of~$r$ copies of the object~$T_1$.

\begin{remark}
Some authors prefer to work with the opposite category~$\bfF_T^{\op}$, so that the object~$T_r$ is the {\em product}~(rather than the co-product) of~$r$ copies of the object~$T_1$. For example, this was Lawvere's convention when he introduced this notion in~\cite{Lawvere:PNAS}. Our convention reflects the point of view that the object~$T_r$ should be thought of as the free~$T$--model~(or~$T$--algebra) on~$r$ generators, covariantly in~$r$~(or rather in~$\bfE$). To make this precise, recall the definition of a model~(or algebra) for a theory~$T$.
\end{remark}

%%%

Given a Lawvere theory~$T$, a {\em$T$--model}~(or \emph{$T$--algebra}) is a presheaf~$X$~(of sets) on the category~$\bfF_T$ that sends~(categorical) sums in~$\bfF_T$ to~(categorical, i.e.~Cartesian) products of sets.~(This means that the canonical map~\hbox{$X(T_r+T_s)\to X(T_r)\times X(T_s)$} induced by the injections is a bijection for all sets~$\underline{r}$ and~$\underline{s}$ in~$\bfE$.) 
We write~$\bfM_T$ for the category of~$T$--models, and we write~$\bfM_T(X,Y)$ to denote the set of morphisms~$X\to Y$ between~$T$--models.  Such a morphism is defined to be  a map of presheaves,~i.e., a natural transformation, so that~$\bfM_T$ is a full subcategory of the category of presheaves on~$\bfF_T$.

\begin{remark}
The values of a~$T$--model are determined up to isomorphism by the value at~$T_1$, and we often use the same notation for a model and its value at~$T_1$.
\end{remark}

For any Lawvere theory~$T$, the category~$\bfM_T$ of~$T$--models is complete and cocomplete. Limits are constructed levelwise, and the existence of colimits follows from the adjoint functor theorem.
The category~$\bfM_T$ becomes symmetric monoidal with respect to the~(categorical) sum, and the unit object~$T_0$ for this structure is also an initial object in the category~$\bfM_T$.

The co-variant Yoneda embedding~$\bfF_T\to\Pre(\bfF_T)
$ sends the object~$T_r$ of the category~$\bfF_T$ to the presheaf~\hbox{$T_s\mapsto\bfF_T(T_s,T_r)$} represented by it. Such a presheaf is readily checked to be a~$T$--model. We refer to a~$T$--model of this form as~\emph{free}. The definitions unravel to give natural bijections~\hbox{$\bfM_T(T_r,X)\cong X^r$} for~$T$--models~$X$, so that~$T_r$ is indeed a free~$T$--model on~$r$ generators. 

%%%

\begin{definition}\label{def:morphism}
A \emph{morphism}~$S\to T$ between Lawvere theories is a functor~\hbox{$L\colon\bfF_S\to\bfF_T$} that preserves sums and free models. This is equivalent to the condition that~$\rmF_T\cong L\circ\rmF_S$, i.e., that~$L$ is a map under~$\bfE$.
\end{definition}

Often, a morphism~$S\to T$ between Lawvere theories is described by giving a functor~$R\colon\bfM_T\to\bfM_S$ that is compatible with the forgetful functors to the category~$\bfM_E$ of sets. Then~$L$ is induced by the left adjoint to~$R$, which exists for abstract reasons, namely by Freyd's adjoint functor theorem.

\begin{examples}
Two of the most important classes of examples of Lawvere theories are given by the theories of~$A$--modules over a fixed ring~$A$, and the theory of~$G$--sets for a fixed group~$G$. In particular, for the trivial group~\hbox{$G=\{e\}$}, we have the Lawvere theory~$E$ of sets. 
\end{examples}

In a slight generalization, we can define~$T$--models not only in the category of sets but in any category with finite~(categorical) products. In particular, we may then consider~$T$--models in other categories of models; this is what we are going to do now.

%%%

Given Lawvere theories~$S$ and~$T$, their Kronecker product~$S\otimes T$ is a Lawvere theory that represents~$T$--models in the category of~$S$--models or, equivalently,~$S$--models in the category of~$T$--models. These theories are described by Freyd~\cite{Freyd}, and in Lawvere's thesis~\cite{Lawvere:tensor}. It follows from this description that there are morphisms
\[
S\longrightarrow S\otimes T\longleftarrow T
\]
of Lawvere theories.

\begin{examples}\label{ex:tensorGsets}
If~$S$ and~$T$ are the theories of modules over rings~$A$ and~$B$, respectively, then~$S\otimes T$ is the theory of~$(A\otimes B)$--modules~\cite[3.11.7b]{Borceux}. If~$S$ and~$T$ are the theories of~$G$--sets and~$H$--sets for groups~$G$ and~$H$, respectively, then~$S\otimes T$ is the theory of~$(G\times H)$--sets: sets with commuting actions by~$G$ and~$H$.
\end{examples}

\begin{example}\label{ex:linearization}
We can pair~$S=\bbZ$, the Lawvere theory of abelian groups, with any Lawvere theory~$T$ to obtain a new Lawvere theory~$\bbZ\otimes T$ whose models are the abelian group objects in the category of~$T$--models. This theory~$\bbZ\otimes T$ comes with a morphism
\begin{equation}\label{eq:linearization}
T\longrightarrow\bbZ\otimes T,
\end{equation}
the {\it linearization}.  
Via the discussion following Definition~\ref{def:morphism}, we can view the linearization morphism as induced by the left adjoint~$L$ to the forgetful functor that takes an abelian group object in~$T$--models to its underlying~$T$--model.  Thus this left adjoint is an abelianization functor. The models of~$\bbZ\otimes T$ are essentially the modules over a ring~\cite[Thm.~13.2]{Wraith}. Indeed, it follows from Morita theory that~$\bbZ\otimes T$--models can be described as modules over the endomorphism ring of of the linearization~$L(T_1)$ of the free~$T$--model~$T_1$ on one generator. Continuing to write~$\bfM_T(X,Y)$ for the set of morphisms~$X\to Y$ of~$T$-models, we see that for each abelian group object~$A$ in~$T$--models there are isomorphisms~$A\cong\bfM_T(T_1,A)\cong\bfM_{\bbZ\otimes T}(L(T_1),A)$.  Hence, via precomposition, every abelian group object~$A$ is a module over the endomorphism ring~$\bfM_{\bbZ\otimes T}(L(T_1),L(T_1))\cong L(T_1)$.
This identification yields a functor from~$\bbZ\otimes T$--models to~$L(T_1)$--modules, and this functor turns out to be an equivalence.
\end{example}

%%%

The description of the Kronecker product~$S\otimes T$ in terms of its models is not the most convenient for our purpose. We shall give another description of it following Hyland and Power~\cite{HylandPower2007}.  Since the category of  natural numbers~(our skeleton~$\bfE$ of the category of finite sets) has finite products as well as sums, for~$\underline{r},\underline{s}\in\bfE$, we have the product~$\underline{r}\times\underline{s}$.  Since~$\bfE$ is skeletal, this product is the set~$\underline{rs}$; we will consider~$\underline{rs}$ as the~$r$--fold sum of~$\underline{s}$ with itself.  Under this identification a morphism of sets~\hbox{$f\colon\underline{s}\to\underline{s}'$} in~$\bfE$ induces a morphism~\hbox{$\underline{r}\times f\colon \underline{r}\times\underline{s}\to \underline{r}\times\underline{s}'$}.

For any Lawvere theory~$S$, we can extend this construction to the category~$\bfF_S$. Given~$r\in\bbZ$ and~\hbox{$S_s\in\bfF_S$}, we define
\[
\underline{r}\times S_s=\underbrace{S_s+\dots+ S_s}_r
\]
to be the~$r$--fold sum in~$\bfF_S$ of~$S_s$ with itself.  
The definition of the category~$\bfF_S$ means this sum must be the object~$S_{r\times s}$, but this identification provides a corresponding construction on morphisms.  If~\hbox{$f\colon S_{s}\to S_{s'}$} is a morphism in~$\bfF_S$, the functoriality of sums produces a morphism
\[ 
\underline{r}\times f\colon \underline{r}\times S_{s}\longrightarrow \underline{r}\times S_{s'}.
\]
Conjugating by the symmetry~$\underline{r}\times \underline{s}\to \underline{s}\times \underline{r}$ we similarly can construct the object~$S_s\times \underline{r}$ and a morphism~\hbox{$f\times \underline{r}\colon S_{s}\times \underline{r}\to S_{s'}\times \underline{r}$}.
As an object in~$\bfF_S$, we have~$S_{s}\times \underline{r}=S_{r\times s}=\underline{r}\times S_{s}$. 

\begin{remark}\label{rtimesisstrongmonoidal}
For fixed~$r$, the construction~$S_s\mapsto \underline{r}\times S_s$ yields a functor~$\bfF_S\to\bfF_S$ that is strong monoidal, as does the construction~$S_s\mapsto S_s\times \underline{r}$.
\end{remark}

\begin{defn}\label{defnKroneckerprod}
Given two Lawvere theories~$S$ and~$T$, their {\em Kronecker}~(or {\em tensor}) {\em product} Lawvere theory~\hbox{$S\otimes T$} is defined by the universal property of admitting maps of Lawvere theories~$S\to S\otimes T$ and~\hbox{$T\to S\otimes T$} so that the operations of~$S$ commute with the operations of~$T$ in the sense that for all~$f\colon S_r\to S_s$ in~$\bfF_S$ and~$f'\colon T_{r'}\to T_{s'}$ in~$\bfF_T$, 
the diagram 
\[
\xymatrix{
(S\otimes T)_{r\times r'}\ar[r]^{\underline{r}\times f'}\ar[d]_{f\times \underline{r}'}& (S\otimes T)_{r\times s'}\ar[d]^{f\times \underline{s}'}\\
(S\otimes T)_{s\times r'}\ar[r]_{\underline{s}\times f'}& (S\otimes T)_{s\times s'}
}
\]
commutes~(in the category~$\bfF_{S\otimes T}$).  The vertical maps here should be interpreted as the image of the maps~\hbox{$f\times r'\colon S_{r\times r'}\to S_{s\times r'}$} in~$\bfF_{S}$ under the map of Lawvere theories~\hbox{$S\to S\otimes T$}, and similarly for the horizontal maps, \emph{mutatis mutandis}.
\end{defn}

\begin{proposition}{\bf(\upshape\cite[Prop.~3.3]{HylandPower2007})}
The Kronecker product extends to a symmetric monoidal structure on the category of Lawvere theories with the theory~$E$ of sets as the unit.
\end{proposition}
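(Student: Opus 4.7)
The plan is to construct the unit, associator, and symmetry isomorphisms via the universal property given in Definition~\ref{defnKroneckerprod} and then deduce the coherence axioms from the uniqueness this universal property provides. Throughout, I would exploit Remark~\ref{rtimesisstrongmonoidal} to guarantee that the constructions $\underline{r}\times(-)$ and $(-)\times\underline{r}$ interact well with sums in each $\bfF_S$, so that the commuting diagrams in Definition~\ref{defnKroneckerprod} even make sense when pulled back along the constructed maps.

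First, I would establish that $E$ is a unit: for any Lawvere theory $T$, the identity functor $\bfF_T\to\bfF_T$ together with $\rmF_T\colon\bfE\to\bfF_T$ exhibits $T$ itself as satisfying the universal property of $E\otimes T$. Indeed, morphisms coming from $\bfE$ factor through the structure functor $\rmF_T$, and the required commutation with the operations of $T$ holds because $\bfE$-operations in $\bfF_T$ are (co)products of identities, which intertwine with every morphism by naturality of the biproduct structure. Uniqueness from the universal property then yields natural isomorphisms $\lambda_T\colon E\otimes T\xrightarrow{\cong}T$ and symmetrically $\rho_T\colon T\otimes E\xrightarrow{\cong}T$.

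Second, for the symmetry $\tau_{S,T}\colon S\otimes T\to T\otimes S$, observe that the commuting square in Definition~\ref{defnKroneckerprod} is invariant under interchanging the roles of $S$ and $T$ while transposing the square. Therefore $T\otimes S$, equipped with the structure maps in the swapped order, satisfies the universal property characterizing $S\otimes T$, and uniqueness produces $\tau_{S,T}$. The relation $\tau_{T,S}\circ\tau_{S,T}=\id$ follows by another appeal to uniqueness, and naturality in both variables is analogous. For the associator, I would extend the universal property to triples: a Lawvere theory $V$ equipped with morphisms $S\to V$, $T\to V$, $U\to V$ whose respective operations pairwise commute in the sense of Definition~\ref{defnKroneckerprod}. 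Both $(S\otimes T)\otimes U$ and $S\otimes(T\otimes U)$ satisfy this three-variable property — each is constructed by formally adjoining commuting operations in two stages, and the second stage forces commutation with the already-commuting pair — so uniqueness yields a canonical isomorphism $a_{S,T,U}\colon(S\otimes T)\otimes U\xrightarrow{\cong}S\otimes(T\otimes U)$.

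Finally, the coherence axioms (the pentagon for $a$, the triangle involving $\lambda$ and $\rho$, and the hexagons for $\tau$) all express equalities between two morphisms whose targets are Lawvere theories defined by universal properties (three- or four-variable versions of the commuting-operations property). Both sides of each coherence diagram induce the same composite structure maps from the constituent theories $S,T,U,V$, so uniqueness forces them to coincide. The main obstacle is bookkeeping: verifying that the strict identification $S_{r\times s} = \underline{r}\times S_s$ used in Definition~\ref{defnKroneckerprod} is compatible with iterating the Kronecker product in a way that makes the three-variable universal property well-posed. Once this is in place, the entire symmetric monoidal structure and all coherence data emerge formally from the universal property, as in~\cite[Prop.~3.3]{HylandPower2007}.
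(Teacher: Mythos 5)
The paper does not prove this proposition at all: it is imported wholesale from Hyland--Power \cite[Prop.~3.3]{HylandPower2007}, and the surrounding text only remarks that the construction ``can be done by hand, or it can be viewed as a special case of their work on pseudo-commutativity'' of the finite-coproduct-completion $2$--monad \cite{HylandPower2002}. Your proposal is a sketch of the by-hand route, and its outline is sound: $E$ is initial among Lawvere theories, the $\bfE$--operations automatically commute with everything by naturality of the coproduct structure maps (so $T$ itself satisfies the universal property of $E\otimes T$), the symmetry comes from transposing the commuting square in Definition~\ref{defnKroneckerprod}, and the associator and all coherence axioms follow from uniqueness once one sets up the multi-variable universal property; Proposition~\ref{prop:prop} is what guarantees these uniqueness appeals are honest (there are no nonidentity $2$--cells to worry about). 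What the Hyland--Power route buys is that all of this coherence comes for free from general $2$--monad theory; what your route buys is elementarity, at the cost of the one step you correctly flag as nontrivial bookkeeping but which deserves to be named precisely: for the three-variable universal property to be satisfied by $(S\otimes T)\otimes U$, you must know that $\bfF_{S\otimes T}$ is generated under composition and finite sums by the images of $\bfF_S$ and $\bfF_T$, and that the commutation relation of Definition~\ref{defnKroneckerprod} is stable under composition and under the sum constructions of Remark~\ref{rtimesisstrongmonoidal}. That requires an explicit presentation of $S\otimes T$ (generators and relations for $\bfF_{S\otimes T}$), which your appeal to the bare universal property does not by itself supply; this is the one substantive ingredient you would need to add to turn the sketch into a complete proof.
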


Hyland and Power remark that the construction of~$S\otimes T$ can be done by hand, or it can be viewed as a special case of their work on pseudo-commutativity and, in particular, on the pseudo-closed structure of the~$2$--category of symmetric monoidal categories~\cite{HylandPower2002}. However, the following proposition shows that this generality is not necessary for Lawvere theories because equality between any two of them is a logical proposition: it either has a unique proof or none.

%%%

\begin{proposition}\label{prop:prop}
Lawvere theories naturally form a~$1$--category rather than a~$2$--category.  More precisely, given two maps of Lawvere theories~$L_1,L_2\colon S\to T$, the set---and hence the space---of natural transformations between~$L_1$ and~$L_2$ is empty, unless~$L_1=L_2$, in which case it contains only the identity transformation.
\end{proposition}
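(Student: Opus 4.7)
The plan is to exploit the defining property of morphisms of Lawvere theories: by Definition~\ref{def:morphism}, any such $L$ is a functor under~$\bfE$, so in particular $L_1\circ\rmF_S=L_2\circ\rmF_S=\rmF_T$ strictly. This means $L_1$ and $L_2$ already agree on objects, each sending $S_r=\rmF_S(\underline{r})$ to $T_r=\rmF_T(\underline{r})$, and the appropriate $2$-categorical home for Lawvere theories is the coslice $2$-category of small categories under~$\bfE$. A natural transformation $\alpha\colon L_1\Rightarrow L_2$ between morphisms of Lawvere theories in this setting is required to lie under~$\bfE$ as well: the whiskering $\alpha\cdot\rmF_S\colon\rmF_T\Rightarrow\rmF_T$ must equal the identity transformation of $\rmF_T$.

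Spelled out componentwise, this whiskering condition reads $\alpha_{\rmF_S(\underline{r})}=\alpha_{S_r}=\id_{T_r}$ for every $\underline{r}\in\bfE$. The essential input of the Lawvere-theory framework is that $\rmF_S$ is bijective on objects, so every object of $\bfF_S$ has the form $S_r=\rmF_S(\underline{r})$ for a unique~$r$. Hence $\alpha_Y=\id$ at every object~$Y$ of $\bfF_S$, and we have reduced to showing that such an $\alpha$ exists only when $L_1=L_2$.

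With every component of $\alpha$ an identity morphism, the naturality square for $\alpha$ at an arbitrary morphism $f\colon Y\to Y'$ of $\bfF_S$ collapses to $L_1(f)=L_2(f)$, forcing $L_1=L_2$ as functors, and $\alpha$ is then necessarily the identity transformation of this common functor. There is no substantial obstacle: the entire proof is the observation that the strictness of~$\rmF_S$ on objects forces the coslice $2$-category of Lawvere theories to be locally discrete, so that no room is left for nontrivial $2$-cells. I expect the only mild subtlety to be phrasing \emph{which} natural transformations we are counting --- namely those that are the identity on the $\bfE$-level structure --- since without that convention one could, for instance, find the non-identity endo-transformations of~$\id$ given by "multiplication by $a$" on the theory of commutative monoids.
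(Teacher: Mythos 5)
Your proof is correct and follows essentially the same route as the paper's: morphisms of Lawvere theories are functors strictly under~$\bfE$, so any $2$--cell in the coslice must whisker to the identity on~$\bfE$, and bijectivity of~$\rmF_S$ on objects then forces every component of~$\alpha$ to be an identity morphism, whence $L_1=L_2$ by naturality. Your closing caveat---that one must count only transformations lying under~$\bfE$, since otherwise, e.g., multiplication by~$a$ on free commutative monoids gives a nontrivial endo-transformation of the identity functor---is a correct and worthwhile precision that the paper's proof leaves implicit.
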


\begin{proof}
A morphism of Lawvere theories is a map under~$\bf{E}$, i.e.,\ a strictly commuting diagram of the following form:
\[ 
\xymatrix@R=1ex{
& \bfF_S\ar[dd]^-L\\
\bf{E}\ar[ur]\ar[dr] &\\
&\bfF_T
}
\]
Thus, a natural transformation between such~$L$ must restrict to the identity natural transformation on~$\bfE$. Since all objects in~$\bfF_S$ are in the image of~$\bfE$, this forces all natural transformations to be the identity. 
\end{proof}

For Lawvere theories, there is no room for the ``psubtlety'' of pseudoness.

%%%

\subsection{Algebraic~K-theory}\label{sec:K} 

Let~$\bfS$ denote a symmetric monoidal groupoid. To build a K-theory spectrum~$\rmK(\bfS)$, we can use Segal's definition~\cite{Segal} of the algebraic~K-theory of a symmetric monoidal category in terms of~$\Gamma$--spaces. For the multiplicativity properties we need in the following Section~\ref{sec:pairings}, we in fact use a variant of Segal's construction given by Elmendorf--Mandell~\cite{EM1}~(see also~\cite{BO}) which takes values in the category of symmetric spectra and builds the spaces of the~K-theory spectrum ``all at once'' instead of iteratively.

%%%

\begin{defn}\label{def:K(T)}
Let~$T$ be a Lawvere theory. The {\it algebraic~K-theory} of~$T$ is the spectrum
\begin{equation}
\rmK(T)=\rmK(\bfF_T^\times),
\end{equation}
that is, the spectrum corresponding to the symmetric monoidal groupoid~$\bfF_T^\times$ of isomorphisms in the symmetric monoidal category~$\bfF_T$ of finitely generated free~$T$--models, where the monoidal structure is given by the categorical sum.
\end{defn}

\begin{remark}
Since the category~$\bfF_T$ can be identified with the symmetric monoidal category of finitely generated free~$T$--models, Definition~\ref{def:K(T)} concerns the algebraic~K-theory of finitely generated free~$T$--models. In particular, the group~$\rmK_0(T)=\pi_0\rmK(T)$ is the Grothendieck group of isomorphism classes of finitely generated free~$T$--models. This group is always cyclic, generated by the isomorphism class~$[\,T_1\,]$ of the free~$T$--model on one generator. However, the group~$\rmK_0(T)$ does not have to be infinite cyclic. This happens, for instance, for the theory of Cantor algebras, see Section~\ref{sec:Cantor}.
\end{remark}

\begin{remark}\label{rem:surjection}
A morphism~$S\to T$ of Lawvere theories as in Definition~\ref{def:morphism} induces, via the left-adjoint functor~$\bfF_S\to\bfF_T$, a morphism~$\rmK(S)\to\rmK(T)$ of algebraic~K-theory spectra. The left adjoint~$\bfF_S\to\bfF_T$ sends the free~$S$--model~$S_1$ on one generator to the free~$T$--model~$T_1$ on one generator. It follows that the induced homomorphism~$\rmK_0(S)\to\rmK_0(T)$ between cyclic groups is surjective, being the identity on representatives.
\end{remark}

\begin{example}
If~$A$ is a ring, denote by~$\rmK(A)$ the  K-theory of the Lawvere theory of~$A$--modules. This spectrum is the~`free' version of the usual algebraic K-theory of the ring~$A$, that is, Quillen's algebraic~K-theory~$\rmK^{\mathrm{free}}(A)$ of the category of finitely generated free~$A$--modules.  It is perhaps more common to consider the algebraic K-theory spectrum~$\rmK^{\mathrm{proj}}(A)$ of the category of finitely generated projective~$A$--modules as the `algebraic K-theory of~$A$.'  However, the inclusion of free modules into projective modules induces a map 
\[
\rmK^{\mathrm{free}}(A)\longrightarrow\rmK^{\mathrm{proj}}(A)
\]
which is an equivalence whenever projective modules are free, including for fields and for principal ideal domains.  In fact, this map is an equivalence on components and so induces an equivalence on higher homotopy groups~$\pi_n$ for~$n\geqslant 1$.

In particular, the usual K-theory spectrum~$\rmK(\bbZ)$ is the~K-theory spectrum of the Lawvere theory of abelian groups in the guise of~$\bbZ$--modules. We refer to~\cite{Bohmann+Szymik} for an extensive supply of examples of algebraic~K-theory spectra~$\rmK(T)$ of Lawvere theories~$T$ that are not of this form. For the moment, we only mention the initial theory~$E$ of sets, where~$\rmK(E)\simeq\bbS$ is the sphere spectrum. We will discuss other examples, which are arguably even more interesting, in the later sections to illustrate our results.
\end{example}

%%%

\section{Multiplicative structure}\label{sec:pairings}

The generalized assembly maps are a consequence of multiplicative structure on algebraic K-theory.  In order to define and understand these maps,  we first isolate a part of a general multiplicativity statement by Elmendorf--Mandell that we can then use to produce assembly-type maps, see Theorem~\ref{thm:main}.  The presentation elides a number of the category-theoretical considerations but tells us precisely what kind of functors we shall need to produce assembly maps.  Afterward, we give a more categorically sophisticated and higher level discussion of multiplicativity, which in particular shows that K-theory is lax symmetric monoidal on Lawvere theories, as in Theorem~\ref{thm:lax} and Corollary~\ref{laxmonoidalforlawvere}. Because the proofs for these results are somewhat technical, we have largely postponed them to the end of this section.

In this section, boldface uppercase letters~$\bfA,\bfB,\bfC,\dots$ will denote symmetric monoidal categories. Our default notation for the monoidal product is~$\oplus$ and~$0$ typically denotes the monoidal unit, with indices as in~$\oplus=\oplus_\bfA$ and~$0=0_\bfA$ if needed.  By convention, we use ``symmetric monoidal category'' in this section for symmetric monoidal categories with strict unit, as our primary references are written for this case.  This strictness should be viewed as a basepoint condition.  All symmetric monoidal categories can be strictified, so this does not represent a loss of generality.

%%%

We use the language of \emph{multicategories} to describe the constructions in this section.  All the multicategories we use are implicitly symmetric.  Multicategories may be more familiar to some readers under the term~\emph{operad}, implied to allow several colors. Our choice of terminology reflects that of our primary references~\cite{EM1,EM2} for this work.  The terminological distinction is partly philosophical. In this work, the multicategories appear as generalizations of categories instead of as parameter spaces of operations.  Of course, these roles are intimately linked, and we invite the reader to use their preferred term.

One way to formulate multiplicativity is in terms of ``bilinear functors.''  This formulation is analogous to thinking about bilinear maps between vector spaces, rather than the tensor product of vector spaces.

\begin{defn}\label{defnbilinearfunctor}
A \emph{bilinear functor} of symmetric monoidal categories is a functor~$P\colon\bfA\times \bfB\to \bfC$ together with natural distributivity isomorphisms 
\[
\delta_l\colon P(a,b)\oplus P(a',b)\to P(a\oplus a',b)\\
 \text{\quad and\quad }\\
\delta_r\colon P(a,b)\oplus P(a,b')\to P(a,b\oplus b')
\]
satisfying some unitality and compatibility conditions which are spelled out in~\cite[Def.~7.1]{BO}.
\end{defn}
Observe that the distributivity transformations mean in particular  that~$P$ is strong monoidal ``in each variable separately'' in the sense that if we fix an object~$a\in \bfA$, the functor~$P(a,-)$ is strong monoidal and if we fix~$b\in \bfB$, the functor~$P(-,b)$ is strong monoidal.

\begin{example}\label{unitbilinearfunctorex}
For any symmetric monoidal category~$\bfC$, there is a ``left unit'' bilinear functor
\[ 
u\colon \bfE^\times\times \bfC\to \bfC
\]
given on objects by~$u(\underline{n},c)=c^{\oplus n}$. The components of the distributivity natural transformation~$\delta_l$ are the identity maps  
\[
c^{\oplus n}\oplus c^{\oplus n'}=c^{\oplus n+n'}
\]
and the components of the distributivity natural transformation~$\delta_r$ are the reordering isomorphisms
\[
c^{\oplus n}\oplus {c'}^{\oplus n}\to (c\oplus c')^{\oplus n}.
\]
One can similarly define a ``right unit'' bilinear functor~$\bfC\times \bfE^\times\to \bfC$; here the left distributivity is given by reordering.
\end{example}

\begin{remark}
For a Lawvere theory~$S$, the strong monoidal functor~$\underline{r}\times -\colon \bfF_S\to \bfF_S$ of Remark~\ref{rtimesisstrongmonoidal}, taking~\hbox{$S_{s}\mapsto \underline{r}\times S_s$}, is~$u(\underline{r},-)$.
\end{remark}

\begin{example}\label{ringbilinearfunctorex}
A{\it~ring category} structure on a~(strict) symmetric monoidal category~$\bfA=(\bfA,\oplus,0)$ consists of a bilinear functor~$\otimes\colon \bfA\times \bfA\to \bfA$ and an object~$1\in \bfA$ such that~$1\otimes a=a=a\otimes 1$, and satisfying appropriate conditions~(see~\cite[Def.~3.3]{EM1}). This is also a{\it~rig category}
as defined by Baas, Dundas, Richter, and Rognes~\cite[\S2.2]{BDRR13}.
\end{example}

%%%

The following result is a consequence of the fact that Elmendorf--Mandell's~K-theory is an enriched multifunctor from permutative categories to spectra~\cite[Thm.~6.1]{EM1}. The slight extension to symmetric monoidal categories is in~\cite[Thm.~7.4]{BO}. Since our Lawvere theories~$\bfF_T^\times$ form permutative categories, this extension is not strictly necessary for our work.      

\begin{theorem}[\bf {\cite[Thm.~6.1]{EM1}}, {\cite[Thm.~7.4]{BO}}]\label{multiplicativektheory}
  Let~$\bfA$,~$\bfB$ and~$\bfC$ be symmetric monoidal categories~(with strict units).  A bilinear functor~\hbox{$\bfA\times\bfB\to\bfC$} of symmetric monoidal categories induces a morphism
\[
\rmK(\bfA) \sma \rmK(\bfB) \to \rmK(\bfC)
\]
of spectra.  This structure is associative and unital.

In the case where the bilinear functor~$\bfA\times \bfA\to \bfA$ is the multiplication of a ring category as in Example~\ref{ringbilinearfunctorex}, the induced map
\[
\rmK(\bfA)\sma\rmK(\bfA)\to \rmK(\bfA)
\]
is the multiplication of a ring structure on~$\rmK(\bfA)$.\\
In the case where the bilinear functor~$\bfE^\times\times \bfC\to \bfC$ is the left unit bilinear functor of Example~\ref{unitbilinearfunctorex}, the induced map
\[
\bbS\sma \rmK(\bfC)\simeq\rmK(\bfE^\times)\sma \rmK(\bfC)\to\rmK(\bfC)
\]
is the left unit map for the smash product~$\sma$ of spectra. 
\end{theorem}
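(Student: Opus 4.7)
The plan is to deduce this theorem from the Elmendorf--Mandell theorem that the $\rmK$-theory construction is an enriched multifunctor, treating the data of a bilinear functor and the data of a map $\rmK(\bfA)\sma\rmK(\bfB)\to\rmK(\bfC)$ as $2$-ary morphisms in appropriate multicategories. Once phrased this way, the conclusion becomes essentially a direct application of multifunctoriality.

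First, I would make the two multicategories in play explicit. Strict-unit symmetric monoidal categories assemble into a multicategory $\mathbf{SMC}$ whose $k$-ary morphisms $(\bfA_1,\dots,\bfA_k)\to\bfC$ are $k$-linear functors, generalizing Definition~\ref{defnbilinearfunctor}; in the case $k=2$ these are exactly the bilinear functors in the hypothesis of the theorem. Symmetric spectra form a symmetric monoidal category under $\sma$, and any symmetric monoidal category gives rise to a multicategory whose $k$-ary morphisms $(X_1,\dots,X_k)\to Y$ are morphisms $X_1\sma\cdots\sma X_k\to Y$. With these structures fixed, the content of \cite[Thm.~6.1]{EM1}, together with its extension to strict-unit symmetric monoidal categories in \cite[Thm.~7.4]{BO}, is precisely that $\rmK$ defines a multifunctor from $\mathbf{SMC}$ to the multicategory associated to symmetric spectra.

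Second, applying this multifunctoriality is now immediate. A bilinear functor $P\colon\bfA\times\bfB\to\bfC$ is a $2$-ary morphism in $\mathbf{SMC}$, so its image $\rmK(P)$ is a $2$-ary morphism in spectra, i.e., exactly a map $\rmK(\bfA)\sma\rmK(\bfB)\to\rmK(\bfC)$. Associativity and unitality of this assignment are built into the definition of a multifunctor, since multifunctors by definition preserve multicategorical composition and identities. For the two listed special cases, I would then argue by general multicategorical formalism: a ring category is essentially a monoid in $\mathbf{SMC}$ whose multiplication is the given bilinear functor $\otimes$, and since multifunctors send monoids to monoids, $\rmK(\bfA)$ inherits an $\bbS$-algebra structure with multiplication $\rmK(\otimes)$. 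For the left unit bilinear functor $u\colon\bfE^\times\times\bfC\to\bfC$, observe that $u$ exhibits $\bfC$ as a module over the monoidal unit $\bfE^\times$ of $\mathbf{SMC}$; any multifunctor must send the unit to the unit and unit actions to unit actions, so combined with $\rmK(\bfE^\times)\simeq\bbS$ this forces the induced map to agree with the canonical unit action of $\bbS$ on $\rmK(\bfC)$.

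The main obstacle, which I would cite rather than reprove, is the multifunctoriality of $\rmK$ itself: refining the Segal $\Gamma$-space machinery to a multifunctor valued in symmetric spectra is the central technical content of \cite{EM1}, and upgrading the source from permutative to strict-unit symmetric monoidal categories is the content of \cite[Thm.~7.4]{BO}. Given those inputs as a black box, the present theorem reduces to unwinding the definitions of ``$2$-ary morphism'' on either side of the multifunctor $\rmK$.
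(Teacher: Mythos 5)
Your proposal matches the paper's treatment: the paper states this theorem as a cited consequence of Elmendorf--Mandell's multifunctoriality of K-theory (\cite[Thm.~6.1]{EM1}, extended to strict-unit symmetric monoidal categories in \cite[Thm.~7.4]{BO}) and explicitly describes it as ``a map of multicategories takes binary maps to binary maps,'' which is exactly your reduction. The paper offers no independent proof beyond this citation, so your unwinding of $2$-ary morphisms on both sides, with the multifunctoriality taken as a black box, is essentially the same argument.
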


One way to think about Theorem~\ref{multiplicativektheory} is that it tells us that K-theory of symmetric monoidal categories is ``morally lax symmetric monoidal,'' in the following sense.  Symmetric monoidal categories do not form a symmetric monoidal category because there is, in general, no representing ``tensor product'' symmetric monoidal category~\hbox{``$\bfA\otimes \bfB$''} for bilinear functors~\cite{HylandPower2002}.  If such a tensor product  exists, then there is a universal bilinear functor~$\bfA\times \bfB\to \bfA\otimes\bfB$ and Theorem~\ref{multiplicativektheory} provides the type of map of spectra~$\rmK(\bfA)\sma\rmK(\bfB)\to\rmK(\bfA\otimes \bfB)$ needed to make the~K-functor lax symmetric monoidal.  Since the tensor product doesn't always exist, Elmendorf and Mandell's approach is to work with the multicategory of permutative categories, in which~$n$--ary maps are given by~$n$--multilinear functors.  They show that~K-theory is a multifunctor from this multicategory to the category of spectra.  Theorem~\ref{multiplicativektheory} is an explicit statement of the fact that a map of multicategories takes binary maps to binary maps. 

Using Theorem~\ref{multiplicativektheory}, applied to the case of Lawvere theories, we can prove the following result.

\begin{theorem}\label{thm:main}
For each pair of Lawvere theories~$S$ and~$T$, there is a morphism
\begin{equation}\label{eq:main}
\rmK(S)\wedge\rmK(T)\longrightarrow\rmK(S\otimes T)
\end{equation}
of spectra that is natural in~$S$ and~$T$  and that is induced by the multiplication of integers at the level of~$\pi_0$.
\end{theorem}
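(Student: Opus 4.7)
My plan is to invoke Theorem~\ref{multiplicativektheory} applied to a bilinear functor
\[
P\colon \bfF_S^\times\times\bfF_T^\times\longrightarrow \bfF_{S\otimes T}^\times
\]
that I construct explicitly from the defining universal property of the Kronecker product. Since the induced map on $\pi_0$ then follows from tracking generators through $P$, and naturality in $S$ and $T$ is immediate from the functoriality of $-\otimes-$, the entire theorem reduces to building $P$ and checking that it is bilinear in the sense of Definition~\ref{defnbilinearfunctor}.

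On objects I would define $P(S_r,T_{r'})=(S\otimes T)_{r\times r'}$. On morphisms, given $f\colon S_r\to S_s$ in $\bfF_S$ and $f'\colon T_{r'}\to T_{s'}$ in $\bfF_T$, the morphisms of Lawvere theories $S\to S\otimes T\leftarrow T$ provided by Definition~\ref{defnKroneckerprod} produce $f\times\underline{r}'$ and $\underline{s}\times f'$ (and their obvious siblings) inside $\bfF_{S\otimes T}$; the content of the commuting square in Definition~\ref{defnKroneckerprod} is exactly that the two composites
\[
(S\otimes T)_{r\times r'}\xto{f\times \underline{r}'}(S\otimes T)_{s\times r'}\xto{\underline{s}\times f'}(S\otimes T)_{s\times s'}
\]
and
\[
(S\otimes T)_{r\times r'}\xto{\underline{r}\times f'}(S\otimes T)_{r\times s'}\xto{f\times \underline{s}'}(S\otimes T)_{s\times s'}
\]
agree, so $P(f,f')$ is unambiguously defined. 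Functoriality in each variable separately then follows from the strong monoidality of the functors $\underline{r}\times-$ and $-\times\underline{r}'$ recorded in Remark~\ref{rtimesisstrongmonoidal}, and joint functoriality from the same commuting square. Restricting to the groupoids of isomorphisms gives the desired $P$.

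For the distributivity data, observe that the functor $\rmF_{S\otimes T}\colon\bfE\to\bfF_{S\otimes T}$ preserves sums; hence there are canonical isomorphisms
\[
(S\otimes T)_{(r+r')\times r''}\cong (S\otimes T)_{r\times r''}\oplus(S\otimes T)_{r'\times r''}
\]
and their counterparts in the second variable. These serve as the components of $\delta_l$ and $\delta_r$ in Definition~\ref{defnbilinearfunctor}; naturality and the coherence axioms of~\cite[Def.~7.1]{BO} all reduce to coherences for the sum-preserving functor $\rmF_{S\otimes T}$ and the symmetry of the Kronecker product $\underline{r}\times\underline{r'}\cong\underline{r'}\times\underline{r}$ in $\bfE$. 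With $P$ in hand, Theorem~\ref{multiplicativektheory} produces the map~\eqref{eq:main}, and the naturality in $S$ and~$T$ follows because the construction of $P$ commutes with the morphisms $S\to S'$ and $T\to T'$ of Lawvere theories via the induced functors $\bfF_{S\otimes T}\to\bfF_{S'\otimes T'}$.

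For the $\pi_0$ statement, Remark~\ref{rem:surjection} implies that $\rmK_0(S)$, $\rmK_0(T)$ and $\rmK_0(S\otimes T)$ are cyclic generated by $[S_1]$, $[T_1]$ and $[(S\otimes T)_1]$ respectively. The bilinear functor $P$ sends $(S_1,T_1)$ to $(S\otimes T)_{1\times 1}=(S\otimes T)_1$, so on components the induced pairing is the unique bilinear map of cyclic groups taking $(1,1)$ to $1$, namely ordinary integer multiplication. The main obstacle I anticipate is purely bookkeeping: writing down $P$ on morphisms in a way that visibly satisfies the coherence axioms of~\cite[Def.~7.1]{BO}. Definition~\ref{defnKroneckerprod} is designed so that this verification is mechanical, but it is tedious, and I expect the cleanest route is the more abstract discussion alluded to in the excerpt (the lax symmetric monoidal statement of Theorem~\ref{thm:lax} / Corollary~\ref{laxmonoidalforlawvere}), specialised to the binary case.
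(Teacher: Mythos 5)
Your proposal is correct and takes essentially the same route as the paper: the same bilinear functor $P(S_m,T_n)=(S\otimes T)_{m\times n}$, with morphisms given by the common composite in the commuting square of Definition~\ref{defnKroneckerprod}, fed into Theorem~\ref{multiplicativektheory}, followed by the same restriction to isomorphism subcategories and the same $\pi_0$ bookkeeping. The only cosmetic difference is that the paper notes the distributivity transformations are literally identity morphisms (the relevant objects coincide in the skeletal category $\bfF_{S\otimes T}$), whereas you describe them as canonical isomorphisms coming from sum preservation.
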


%\note{See the note after the proof.}

We give a detailed proof of Theorem~\ref{thm:main} at the end of this section. As a consequence of having this result, a Lawvere theory~$T$ that has a multiplication~\hbox{$T\otimes T\to T$} produces a multiplication~\hbox{$\rmK(T)\sma \rmK(T)\to \rmK(T)$} in spectra. We discuss this rather restrictive, but still important, situation further in the following Section~\ref{sec:monoids}. Similarly, the left unit map~$E\otimes T\to T$ of a Lawvere theory~$T$ yields the left unit map~\hbox{$\bbS\sma\rmK(T)\simeq\rmK(E)\sma \rmK(T)\to \rmK(T)$} in spectra.  

The category of Lawvere theories does have a symmetric monoidal structure, with tensor product given by the Kronecker product, and Theorem~\ref{thm:main} is singling out the natural transformation that makes K-theory into a lax symmetric  monoidal functor from the symmetric monoidal category of Lawvere theories to the category of spectra.  In fact, the remainder of this section focuses on showing that K-theory of Lawvere theories is a lax symmetric monoidal functor---see Corollary \ref{laxmonoidalforlawvere}.

\begin{remark}
The categorically-minded reader may notice that while we use the phrase ``lax symmetric monoidal functor'' to describe K-theory, the morphism of Theorem \ref{thm:main} arises simply from the lax monoidality of K-theory; it doesn't require anything about the symmetry.  This is because being ``symmetric'' doesn't involve any additional structure on a lax monoidal functor; it just means the lax monoidal structure maps preserve the symmetry.  For expository consistency, we choose to describe K-theory in terms of being (or failing to be) lax symmetric monoidal throughout this paper, even when preserving symmetries doesn't play a role in the arguments.  
\end{remark}

%%%

 Definition \ref{def:K(T)} constructs the algebraic K-theory of Lawvere theories as a composite functor
\[\mathbf{Lawvere}\longrightarrow \mathbf{PermCat}\xto{\ \rmK\ } \mathbf{Spectra}.\]
Since~$\mathbf{PermCat}$ isn't a symmetric monoidal category, we cannot use this factorization to prove that the composite is a lax symmetric monoidal functor.  Instead, we use a further factorization of the K-theory construction which relies on a second phrasing  of multiplicativity, due to Elmendorf and Mandell~\cite{EM2} with revision by Johnson and Yau~\cite{JY}. Their work, which we recall in Theorem~\ref{EM2mainthrm}, factors the functor~$\rmK$ above through a symmetric monoidal category of based multicategories that have the structure of modules over a particular multicategory denoted~$\modmulti$ (see Definition \ref{defnterminalmodulemulticategory}).  This allows us to factor the algebraic K-theory of Lawvere theories as follows:
\begin{equation}\label{diagramfactoringKthry}
\xymatrix{
  \mathbf{Lawvere}\ar[r]\ar[dr] & \mathbf{PermCat}\ar[d]\ar[dr]^{\rmK}\\
   & \modmodules\ar[r]_-{\rmK} \ar[r]& \mathbf{Spectra}
}
\end{equation}
All the categories in the lower left path %around the top
are symmetric monoidal, which allows us to use this factorization to show that the composite functor across the top is lax symmetric monoidal.

The factorization in Diagram~\eqref{diagramfactoringKthry} and the lax symmetric monoidality of the horizontal functor~$\rmK$ are the result of the Elmendorf--Mandell and Johnson--Yau phrasing of the multiplicativity of K-theory, which we now recall. In what follows, we let~$\modmodules$ denote the category of~$\modmulti$--modules in the category of symmetric small based multicategories.  This category of modules is in fact a full~$2$--subcategory of the~$2$--category of symmetric small based multicategories \cite[Prop.~III.10.1.28]{JY}, although this is not necessary for our work.

\begin{theorem}[\bf{\cite[Thm.~1.3]{EM2}, \cite[Thm.~III.10.3.17]{JY}}]\label{EM2mainthrm}
Based multicategories form a symmetric monoidal category~$\mathbf{Mult}_*$ and the multicategory~$\modmulti$ of Definition \ref{defnterminalmodulemulticategory} is naturally a commutative monoid~{\upshape\cite[Prop.~III.10.1.16]{JY}}.  The~K-theory construction of~{\upshape\cite{EM1}} factors as the~``underlying multicategory'' functor~$U$ and a lax symmetric monoidal functor from~$\modmodules$ to spectra:
\[
\xymatrix{ 
\mathbf{PermCat}\ar[d]_-{U}\ar[dr]^-{\rmK} &\\
\modmodules\ar[r]_-{\rmK}&\Spectra
}   
\]
\end{theorem}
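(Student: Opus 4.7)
The plan is to assemble the three assertions (symmetric monoidal structure on $\mathbf{Mult}_*$; commutative monoid structure on $\modmulti$; and the lax symmetric monoidal factorization of $\rmK$) following the strategy of Elmendorf--Mandell and its subsequent reworking by Johnson--Yau. Although the statement is largely a citation, the proof splits into three largely independent pieces, each of which I would handle in turn.

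First, I would construct the symmetric monoidal structure on based multicategories. The tensor product $\sma$ is built by analogy with the smash product of based spaces: form the free based multicategory on the product of object sets and operations, then impose relations making operations of the two factors commute in the strong biased sense required for multilinearity. The universal property to be verified is that based multifunctors $M \sma N \to P$ correspond naturally to bilinear based multifunctors $M \times N \to P$. Associativity, symmetry, and the unit property (with the terminal based multicategory as unit) are then forced by the universal property together with coherence of the underlying free construction.

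Second, I would identify $\modmulti$ as a commutative monoid in $(\mathbf{Mult}_*, \sma)$. Granting Definition~\ref{defnterminalmodulemulticategory}, the multiplication $\modmulti \sma \modmulti \to \modmulti$ arises by composing multilinear operations: an $m$-ary operation composed with $n$-ary operations in each input yields an $(m+n)$-ary operation, and the block-sum combinatorics of pointed finite sets organize this into a bilinear multifunctor that factors through $\sma$. Commutativity follows because $\modmulti$ encodes multilinearity \emph{symmetrically} in its inputs, so swapping factors acts trivially up to the prescribed symmetry isomorphisms; unitality is built in.

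Third, I would produce the factorization and verify lax symmetric monoidality. For a permutative category $\mathbf{A}$, the underlying multicategory $U(\mathbf{A})$ has operations the multilinear functors out of powers of $\mathbf{A}$; the coherent symmetry and distributivity of $\mathbf{A}$ equip $U(\mathbf{A})$ with a canonical $\modmulti$-action, giving the left vertical functor. For $\rmK \colon \modmodules \to \Spectra$, I would adapt Segal's $\Gamma$-space machine so that a $\modmulti$-module $M$ yields a $\Gamma$-space whose multiplicative structure is controlled by the $\modmulti$-action; the external pairings $\rmK(M) \sma \rmK(N) \to \rmK(M \sma_{\modmulti} N)$ arise from the universal bilinear map $M \times N \to M \sma_{\modmulti} N$ combined with the multiplicativity of the Segal machine on multilinear inputs. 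Coherence of these pairings (associativity, unit, symmetry hexagons) is then reduced to the corresponding coherences for $\sma$ in $\modmodules$.

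The hard part is the third step, specifically verifying that the K-theory pairings assemble into a \emph{strict} lax symmetric monoidal structure rather than one requiring higher-categorical bookkeeping. The whole reason for passing through $\modmodules$ instead of $\mathbf{PermCat}$ is exactly that $\modmodules$ \emph{is} a genuine symmetric monoidal $1$-category, so the coherences to check are just diagrams of spectra; still, exhibiting them from the $\Gamma$-space model requires the explicit simplicial combinatorics carried out in \cite{EM2} and cleaned up in \cite[Part~III]{JY}, and this is where I would simply defer to those references rather than reconstruct the machine.
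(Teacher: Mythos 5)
This statement is imported wholesale from Elmendorf--Mandell and Johnson--Yau; the paper offers no proof of it, only the citations, so in the end your strategy (outline the three pieces, then defer to \cite{EM2} and \cite[Part~III]{JY} for the simplicial and coherence details) is the same as the paper's. Your decomposition into the three assertions and your identification of where the real work lies are faithful to how those references proceed. Two points in your outline need correction, though. First, you misdescribe the functor~$U$: for a permutative category~$\bfA$, the underlying multicategory~$U\bfA$ has the \emph{same objects} as~$\bfA$ and $n$--ary morphism sets~$U\bfA(a_1,\dots,a_n;b)=\bfA(a_1\oplus\dots\oplus a_n,b)$; it is not built from ``multilinear functors out of powers of~$\bfA$'' --- that description belongs to the multicategory structure on~$\mathbf{PermCat}$ itself, a different object. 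The $\modmulti$--module structure on~$U\bfA$ then comes from the strict unit~$0\in\bfA$ acting on every object, not from distributivity data. Second, your first step asserts that the unit axiom for the smash product on~$\mathbf{Mult}_*$ is ``forced by the universal property.'' This is precisely the point that cannot be waved at: the failure of the unit (and associated coherence) isomorphisms for the smash product on arbitrary based multicategories is the reason Johnson--Yau reroute the factorization through~$\modmodules$, where~$\modmulti$ serves as the unit, rather than working in~$\mathbf{Mult}_*$ directly. Your closing paragraph attributes the passage to~$\modmodules$ only to the non-monoidality of~$\mathbf{PermCat}$, which misses this second, independent reason. Neither slip is fatal to the overall plan, since the cited sources handle both, but as written the outline would not assemble into a proof without repairing them.
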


Note that the underlying multicategory~$U\bfC$ of a permutative category~$\bfC$ has a natural basepoint given by the unit object for the monoidal product.  This structure extends to a canonical~$\modmulti$--module structure on~$U\bfC$ \cite[Def.~III.10.2.13]{JY}. See \cite[Def.~III.10.3.25]{JY} for a clear and concise discussion of the factorization in the above diagram.

With this result in hand, it suffices to prove that the composite of the embedding of Lawvere theories into permutative categories and the underlying multicategory functor is a lax symmetric monoidal functor from Lawvere theories to multicategories.

\begin{theorem}\label{thm:lax}
Let~$\iota\colon \mathbf{Lawvere}\to \mathbf{PermCat}$ denote the embedding of Lawvere theories into permutative categories via~$T\mapsto \mathbf{F}_T$. Let~$\iota^\times$ denote the embedding~$\mathbf{Lawvere}\to \mathbf{PermCat}$ via~$T\mapsto \bfF_T^\times$; we can view~$\iota^\times$ as the composite of~$\iota$ and the functor taking a permutative category to its subcategory of isomorphisms. Then the composite functors~$U\iota$ and~$U\iota^\times$ in the diagrams
\[
\xymatrix{ 
\mathbf{Lawvere}\ar[r]^{\iota}\ar[dr]_{U\iota} & \mathbf{PermCat}\ar[d]^U\\
& \modmodules
}\qquad\xymatrix{
\mathbf{Lawvere}\ar[r]^{\iota^\times}\ar[dr]_{U\iota^\times} & \mathbf{PermCat}\ar[d]^U\\
& \modmodules
}
\]
are both lax symmetric monoidal.
\end{theorem}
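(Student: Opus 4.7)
The plan is to exhibit explicit lax symmetric monoidal structure maps on $U\iota$ (and $U\iota^\times$) by using the universal property of the Kronecker product to produce bilinear functors, and then pushing these through the multicategorical apparatus of Theorem~\ref{EM2mainthrm}. For Lawvere theories $S$ and $T$, I would first construct a bilinear functor
\[
P_{S,T}\colon \bfF_S\times\bfF_T\longrightarrow\bfF_{S\otimes T}
\]
in the sense of Definition~\ref{defnbilinearfunctor}, with $P_{S,T}(S_r,T_{r'})=(S\otimes T)_{rr'}$ and $P_{S,T}(f,f')$ defined as the common diagonal of the commuting square in Definition~\ref{defnKroneckerprod}. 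Functoriality is a short diagram chase that re-applies the commutativity axiom of that definition, and bilinearity with identity distributivity isomorphisms follows from Remark~\ref{rtimesisstrongmonoidal}, which tells us that both $\underline{r}\times-$ and $-\times\underline{r}'$ are strong monoidal.

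Next, I would promote $P_{S,T}$ to a $2$-ary multimorphism in the multicategory of permutative categories, apply the underlying-multicategory functor $U$, and invoke the universal property of the tensor product in $\modmodules$ (supplied by Theorem~\ref{EM2mainthrm}) to obtain the lax structure map
\[
\lambda_{S,T}\colon U\iota(S)\otimes U\iota(T)\longrightarrow U\iota(S\otimes T).
\]
The unit constraint $\mathbf{1}\to U\iota(E)$ follows from the identification $\iota(E)=\bfF_E=\bfE$ with the unit permutative category, which $U$ sends to (an object canonically isomorphic to) the monoidal unit $\modmulti$ of $\modmodules$. Since $P_{S,T}$ manifestly sends isomorphisms to isomorphisms, it restricts to a bilinear functor $\bfF_S^\times\times\bfF_T^\times\to\bfF_{S\otimes T}^\times$, and the identical argument gives a lax structure on $U\iota^\times$.

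For the coherence conditions---associativity, unitality, and the symmetry axiom---I would appeal to Proposition~\ref{prop:prop}. The two sides of each coherence diagram, once unwound via the universal property of $\otimes$ in $\modmodules$, produce multilinear functors with codomain some $\bfF_{S_1\otimes\cdots\otimes S_n}$; by the iterated universal property of the Kronecker product, both sides classify the same data of morphisms of Lawvere theories, and Proposition~\ref{prop:prop} guarantees that there is no room for a $2$-morphism mismatch. This is precisely the point where working with Lawvere theories rather than with general symmetric monoidal categories sidesteps the pseudo-commutativity issues of~\cite{HylandPower2002}.

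The principal obstacle is organizational rather than conceptual: one has to track how $P_{S,T}$ interacts with the basepoints and $\modmulti$-module structures on the $U\bfF_T$ (inherited from the monoidal units $T_0$), and verify that the resulting $2$-ary multimorphism in $\modmodules$ corresponds, under the universal property of $\otimes$, to the advertised morphism. The heavy categorical lifting has already been done in \cite{EM2, JY}; once those results are invoked, the verification of coherence reduces to the diagrammatic checks above, which are rendered routine by Proposition~\ref{prop:prop}.
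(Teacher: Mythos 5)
Your construction of the structure maps matches the paper's: the bilinear functor $P_{S,T}(S_m,T_n)=(S\otimes T)_{m\times n}$ with identity distributivity maps is exactly the one built in the proof of Theorem~\ref{thm:main}, and the passage through $U$ and the universal property of the tensor product of based multicategories is how the paper obtains $U\iota(S)\otimes U\iota(T)\to U\iota(S\otimes T)$. One local error: $U\iota(E)$ is \emph{not} canonically isomorphic to the unit $\modmulti$ of $\modmodules$ --- the multicategory $U(\bfE)$ has objects $\underline{0},\underline{1},\underline{2},\dots$ and large morphism sets, whereas $\modmulti$ has two objects and almost no morphisms. What exists, and what the unit constraint requires, is only a based multifunctor $\modmulti\to U\iota(E)$, which the paper constructs by sending $1\mapsto\underline{1}$ with the identity action of $\underline{0}$. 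Your argument survives if ``identification'' is weakened to ``canonical map,'' but as stated the claim is false.

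The genuine gap is in the coherence step. Proposition~\ref{prop:prop} says that between two parallel morphisms of Lawvere theories there are no natural transformations \emph{unless the morphisms are equal}; it does not force two parallel morphisms, or two parallel multilinear functors, to coincide. So it cannot by itself close a coherence diagram: you still have to prove that the two composites are literally the same functor, which is precisely the content Proposition~\ref{prop:prop} presupposes rather than supplies. Moreover, your reduction of ``multilinear functor $\bfF_{S_1}\times\cdots\times\bfF_{S_n}\to\bfF_{S_1\otimes\cdots\otimes S_n}$'' to ``morphism of Lawvere theories out of the iterated Kronecker product'' uses a representability statement (that the Kronecker product corepresents multilinear functors satisfying the commutation conditions) that the paper never establishes --- Definition~\ref{defnKroneckerprod} phrases the universal property only in terms of the two inclusions $S\to S\otimes T\leftarrow T$ and the commuting squares. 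The paper instead verifies the unit, associativity, and symmetry diagrams by direct inspection: for unitality it checks the two bilinear maps $\modmulti\times U\iota(S)\to U\iota(S)$ on objects and on the two generating shapes of morphisms; for associativity it uses that $U$ is fully faithful and checks that both trilinear functors send $(f,g,h)$ to the same composite, which is well defined by the commutation relations in the Kronecker product. You should either carry out these explicit checks or first prove the corepresentability of multilinear functors by the Kronecker product, at which point the uniqueness clause of that universal property (not Proposition~\ref{prop:prop}) is what forces the two sides to agree.
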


We give a detailed proof of Theorem~\ref{thm:lax} further down.

\begin{corollary}\label{laxmonoidalforlawvere} 
The Elmendorf--Mandell construction of~K-theory gives a lax symmetric monoidal functor
\[
\mathbf{Lawvere}\longrightarrow\Spectra.
\]
\end{corollary}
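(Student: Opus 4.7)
The plan is to deduce the corollary directly by composing two lax symmetric monoidal structures already in hand. Recall from Definition~\ref{def:K(T)} that $\rmK(T)=\rmK(\bfF_T^\times)$, so by Diagram~\eqref{diagramfactoringKthry} the K-theory functor on Lawvere theories factors as
\[
\mathbf{Lawvere}\xto{\iota^\times}\mathbf{PermCat}\xto{U}\modmodules\xto{\rmK}\Spectra.
\]
It therefore suffices to equip this composite with a lax symmetric monoidal structure.

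The first step is to invoke Theorem~\ref{thm:lax}, which equips $U\iota^\times\colon\mathbf{Lawvere}\to\modmodules$ with a lax symmetric monoidal structure. The second step is to invoke Theorem~\ref{EM2mainthrm}, which equips $\rmK\colon\modmodules\to\Spectra$ with a lax symmetric monoidal structure. The third step is the standard categorical fact that the composition of two lax symmetric monoidal functors is canonically lax symmetric monoidal: applied to a pair $(S,T)$ of Lawvere theories, the laxity constraint for $\rmK\circ U\circ\iota^\times$ is the composite
\[
\rmK(S)\wedge\rmK(T)\longrightarrow\rmK\bigl((U\iota^\times S)\otimes(U\iota^\times T)\bigr)\longrightarrow\rmK\bigl(U\iota^\times(S\otimes T)\bigr),
\]
in which the first arrow is the laxity constraint of $\rmK$ on $\modmodules$ and the second applies $\rmK$ to the laxity constraint of $U\iota^\times$. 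Compatibility with the associators, unitors, and symmetries follows formally from the corresponding compatibilities of the two constituent functors, and the unit constraint $\bbS\to\rmK(E)$ is obtained analogously from the unit constraints of $\rmK$ and of $U\iota^\times$.

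Because the substantive work has been isolated into Theorem~\ref{thm:lax} and into the Elmendorf--Mandell--Johnson--Yau Theorem~\ref{EM2mainthrm}, there is no genuine obstacle in the corollary itself; the argument is a formal composition of lax symmetric monoidal structures. The only place one could stumble is in choosing the wrong variant of the embedding---Definition~\ref{def:K(T)} pins down that we need $\iota^\times$ rather than $\iota$---but Theorem~\ref{thm:lax} was stated so as to cover precisely this case.
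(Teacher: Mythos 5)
Your proposal is correct and follows exactly the paper's own route: factor K-theory as $\rmK\circ U\circ\iota^\times$ via Diagram~\eqref{diagramfactoringKthry}, then compose the lax symmetric monoidal structures supplied by Theorem~\ref{thm:lax} and Theorem~\ref{EM2mainthrm}. The only difference is that you spell out the composite laxity constraint explicitly, which the paper leaves implicit.
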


\begin{proof}
The previous two theorems demonstrate that Elmendorf and Mandell's~K-theory construction factors as the composite of the lax symmetric monoidal functor~$\rmK\colon \modmodules\to \Spectra$ preceded by~\hbox{$U\iota^\times\colon \mathbf{Lawvere}\to \modmodules$}, as depicted in Diagram~\eqref{diagramfactoringKthry}.
\end{proof}

%%%

We now prove Theorems~\ref{thm:main} and~\ref{thm:lax} and end this section with some higher categorical remarks.

\begin{proof}[Proof of Theorem~\ref{thm:main}]
In light of Theorem~\ref{multiplicativektheory}, it is sufficient to show that there is a bilinear functor~\hbox{$\bfF_S^\times\times \bfF_T^\times\to \bfF_{S\otimes T}^\times$} of symmetric monoidal categories.  Essentially, this is the universal map that comes from the definition of the~Kronecker product.  However, since general symmetric monoidal categories don't have such a monoidal product, it is worthwhile to be fairly explicit.

Let~$S$ and~$T$ be Lawvere theories.  We show that there is a bilinear functor
\[
P\colon \bfF_S\times\bfF_T\to \bfF_{S\otimes T}
\]
of symmetric monoidal categories with strict unit. Observe that a bilinear functor~$\bfA\times \bfB\to \bfC$ restricts to a bilinear functor~$\bfA^\times \times \bfB^\times \to \bfC^\times$ on the subcategories of isomorphisms in~$\bfA$,~$\bfB$ and~$\bfC$ because functors preserve isomorphisms and the natural distributivity maps are isomorphisms by definition.  Hence it suffices to produce the bilinear functor~$P$.

The functor~$P$ is defined on objects by
\begin{equation}\label{eq:multiplication}
P(S_m,T_n)=(S\otimes T)_{m\times n}.
\end{equation}
On morphisms, the arrow~$P(f,g)$ is defined as either of the composites in the commuting diagram in~$\bfF_{S\otimes T}$ that we obtain from Definition~\ref{defnKroneckerprod}:
\[
\xymatrix{(S\otimes T)_{m\times n} \ar[r]^{\underline{m}\times g}\ar[d]_{f\times \underline{n}} & (S\otimes T)_{m\times n'}\ar[d]^{f\times \underline{n}'}\\
(S\otimes T)_{m'\times n} \ar[r]^{\underline{m}'\times g}& (S\otimes T)_{m'\times n'}
}
\]

The fact that these composites agree implies that this assignment is functorial.

The distributivity natural transformations are in fact given by the identity morphisms:
\[
\delta_l\colon P(S_m,T_n)\oplus P(S_{m'},T_{n})=(S\otimes T)_{(m'\times n)+(m\times n)}=(S\otimes T)_{(m+m')\times n}
\]
and similarly for~$\delta_r$.  It is thus straightforward to check that the required unitality and compatibility conditions hold.

By construction, the monoids of connected components in the categories~$\bfF_S^\times$,~$\bfF_T^\times$ and~$\bfF_{S\otimes T}^\times$ are all quotients of~$\bbN$ and by~\eqref{eq:multiplication} the map on connected components is induced by the multiplication~\hbox{$\bbN\times\bbN\to\bbN$},~$(m,n)\mapsto m\times n$, of natural numbers. Hence the map
\[
\pi_0\rmK(S)\otimes \pi_0\rmK(T)\cong\pi_0(\rmK(S)\sma\rmK(T))\longrightarrow \pi_0\rmK(S\otimes T)
\]
is also induced by multiplication at the level of representatives, using that for any theory~$U$, the abelian group~$\pi_0\rmK(U)$ is canonically a quotient of~$\pi_0\rmK(E)=\bbZ$ via the unit map~$\rmK(E)\to\rmK(U)$.
\end{proof}

%%%

Before proving Theorem \ref{thm:lax}, we define the multicategory~$\modmulti$ and discuss the category~$\modmodules$ in more detail.  

\begin{defn}[{\bf\cite[Def.~5.7]{EM2}, \cite[Ex.~III.8.4.5]{JY}}]\label{defnterminalmodulemulticategory}
  The multicategory~$\modmulti$ has two objects~$0$ and~$1$.  The~$n$--ary morphism sets are defined by
  \begin{align*} \modmulti(0,\dots,0; 0)=\ast \qquad &\text{where the source is the string of~$n$ 0's}\\
    \modmulti(0,\dots,1,\dots, 0; 1)=\ast \qquad &\text{where the source is any string containing exactly one~$1$.}
  \end{align*}
  and all other morphism sets are empty.  Composition is uniquely determined by these assignments. The multicategory~$\modmulti$ may be viewed as a based multicategory with~$0$ as the basepoint.
\end{defn}

The multicategory~$\modmulti$ parametrizes a choice of commutative algebra and a module over that algebra in the sense that a multifunctor~$F\colon \modmulti\to \bf{M}$, where~$\bf{M}$ is an arbitrary multicategory, picks out a commutative algebra object as~$F(0)$ and a module over~$F(0)$ as~$F(1)$.  The unique morphism~$(F(0),F(1))\to F(1)$ then encodes the action of~$F(0)$ on~$F(1)$, for instance.

Recall from \cite{EM2} the definition of the functor~$U\colon\mathbf{PermCat}\to\mathbf{Mult}_*$, where~$\mathbf{Mult}_*$ is the category of based multicategories. For a permutative category~$\bfC$, the underlying multicategory~$U\bfC$ has the same objects as the category~$\bfC$ and for any~$c_1,\dots,c_n,d$, the set of~$n$--ary morphisms~$U\bfC(c_1,\dots,c_n;d)$ is defined to be the morphism set~$\bfC(c_1\oplus\dots\oplus c_n,d)$. Composition is defined in the evident way.

In a permutative category~$\bfC$ with strict unit~$0$, the unit map~$0\oplus 0\to 0$ makes this unit object a commutative monoid.  In fact, for any~$c\in\bfC$ the unit map~$0\oplus c\to c$ makes~$c$ a module over~$0$.   This means that for each choice of~$c\in \bfC$, we have a map of based multicategories~\hbox{$\modmulti\to \bfC$} sending the unit~$0\in \modmulti$ to the unit and~$1\in \modmulti$ to~$c$.  
These maps assemble to give a canonical~$\modmulti$--module structure on~$U\bfC$; in other words, a map~\hbox{$\modmulti\otimes U\bfC\to U\bfC$}~(see \cite[Def.~III.10.2.13]{JY}).  We may thus view~$U$ as factoring through the subcategory~$\modmodules$ of~$\mathbf{Mult}_*$.  Johnson and Yau~\cite[Def.~III.10.1.36]{JY} show that~$\modmodules$ is a symmetric monoidal category with unit~$\modmulti$ and that the morphisms in~$\modmodules$ are simply underlying based multifunctors. 
If one views based multicategories as a generalization of symmetric monoidal categories, then~$\modmulti$--modules are those multicategories where the base object---which is required to have a commutative monoid structure---acts on every object in a uniform way.  

\begin{proof}[Proof of Theorem~\ref{thm:lax}]
By definition, a functor~$F\colon \bfC\to \bfD$ is lax symmetric monoidal if we have a map~\hbox{$0_{\bfD}\to F(0_{\bfC})$} and natural maps~$F(c_1)\oplus_{\bfD}F(c_2)\to F(c_1\oplus_{\bfC}c_2)$ that satisfy some standard axioms. 
We show that~$U\iota$ satisfies this definition.

First, we construct the map of unit objects.  The unit object in~$\modmodules$ is the  multicategory~$\modmulti$ defined in Definition \ref{defnterminalmodulemulticategory}.  The unit object in~$\mathbf{Lawvere}$ is the Lawvere theory~$E$ of sets. We thus require a functor~(of small based multicategories)
\[ 
\modmulti \to U\iota(E).
\]
Since this functor is required to be based, it must send~$0\in \modmulti$ to the unit object~$\underline{0}\in \bfE$, which is the basepoint in~$U(\bfE)$.  Thus the data of this functor is equivalent to picking out a single object~$\underline{n}$ in~$U(\bfE)$ together with a map~$\underline{0}+\underline{n}\to \underline{n}$ defining an action of~$\underline{0}$ on it.  The clear choice is~$1\mapsto \underline{1}$, together with the identity map.  

Next, we need the maps~$U\iota(S)\otimes U\iota(T)\to U\iota(S\otimes T)$, where we are overloading the symbol~$\otimes$ to represent both tensor product of~$\modmulti$--modules  and tensor product of Lawvere theories.  These maps arise from the universal property of the tensor product of based multicategories.  To be more precise, Elmendorf and Mandell define the tensor product of based multicategories so that a morphism of based multicategories~$M_1\otimes M_2\to N$ is precisely the data of a based bilinear map~$(M_1,M_2)\to N$.  These bilinear maps are a multicategorical generalization of the bilinear functors in Definition~\ref{defnbilinearfunctor}; see~\cite[Def.~2.3]{EM2} for the precise definition.

Since~$U\colon \mathbf{PermCat}\to \modmodules$ is a multifunctor \cite[Lem.~III.10.2.14]{JY} and the binary morphisms in the category~$\modmodules$ are the based bilinear maps, it suffices to show that there is a binary morphism~\hbox{$\phi\colon(\iota S,\iota T)\to \iota(S\otimes T)$} in the multicategory~$\mathbf{PermCat}$.  In this case, the composition~$U\phi$ must be a bilinear based map of multicategories~\hbox{$(U\iota S,U\iota T)\to U\iota(S\otimes T)$}, and thus~$U\phi$ induces a morphism of based multicategories~\hbox{$(U\iota S)\otimes(U\iota T)\to U\iota(S\otimes T)$}, as required. 

For Lawvere theories~$S$ and~$T$, we've thus reduced our problem to finding a binary morphism of permutative categories~$\bfF_S\times \bfF_T\to \bfF_{S\otimes T}$.  By definition, this is just a bilinear functor with strict unit and is precisely what we constructed in the Proof of Theorem~\ref{thm:main}, above. 

We've thus constructed all the data making~$U\iota\colon \mathbf{Lawvere}\to \modmodules$ a lax monoidal functor. To complete this argument, one must show that the coherence maps are associative and unital.  These are fairly straightforward to check using the universal property of the tensor product of multicategories: since the tensor product represents based bilinear maps of multicategories, it suffices to check that the required morphisms agree on that level.

For example, to check the commutativity of the left unit diagram
\[
\xymatrix{
\modmulti\otimes U\iota(S) \ar[d]\ar[r]& 
U\iota(E)\otimes U\iota(S)\ar[d]\\
U\iota(S) & 
U\iota(E\otimes S)\ar[l]
}\]
we simply check that the two bilinear maps~$\modmulti\times U\iota(S)\to U\iota(S)$ agree.  On objects, both are given by sending~$(1,S_n)\mapsto S_n$. On morphisms, a (based) bilinear map out~$\modmulti\otimes U\iota(S)$ is defined by where it sends morphisms of two forms.  The first form is \hbox{$(\id_1,f\colon S_{n_1}\oplus \dots \oplus S_{n_j}\to S_n)$} and both bilinear maps send this to~$f$. In the map along the right hand side, this follows because~\hbox{$\id_{\underline{1}}\times f\in E\otimes S$} is sent to~$f$ under the unit map~$E\otimes S\to S$ of Lawvere theories. The second form is \hbox{$((0,\dots,1,\dots, 0)\to 1, \id_{S_n})$} and both bilinear maps send this to the identity on~$S_n$.  On the left side, this is by definition of the~$\modmulti$--modules structure, and in the composite along the right hand side this follows from the fact that the map of  unit objects~$\modmulti\to U\iota(E)$ sends~$(0,\dots,1,\dots,0)\to 1$ to~$\id_{\underline{1}}$.  

For the associativity diagram, we can use the fact that the underlying multicategory functor~$U$ is full and faithful, and so it suffices to check that the two trilinear maps represented by the two composites 
\[\xymatrix{ (U\iota(S)\otimes U\iota(T))\otimes U\iota(V) \ar[r]\ar[d]& U\iota(S)\otimes (U\iota(T)\otimes U\iota(V))\ar[d]\\
U\iota(S\otimes T) \otimes U\iota(V) \ar[d]& U\iota(S)\otimes U\iota(T\otimes V)\ar[d]\\
U\iota((S\otimes T)\otimes V)\ar[r] & U\iota(S\otimes (T\otimes V))
}
\]
 are given by the same functor~$\bfF_S\times \bfF_T\times \bfF_V\to \bfF_{S\otimes T\otimes V}$. Inspecting the definitions shows that both functors send an object~$(S_l,T_m,V_n)$ to~$(S\otimes (T\otimes V))_{l\times m\times n}$ and both send a morphism~$(f,g,h)$ to the composite
\[
(f\times (\id\times \id))\circ (\id\times (g\times \id))\circ (\id\times (\id\times h)),
\] 
which by the definition of the Kronecker product agrees with the composite of these three maps in any other order.

Finally, observe that~$U\iota$ respects the symmetries in~$\mathbf{Lawvere}$ and~$\mathbf{Mult}_*$ in the sense that for any Lawvere theories~$S$ and~$T$, the diagram
\[\xymatrix{
U\iota(S)\otimes U\iota(T) \ar[r]^{\sigma}\ar[d]_{U\phi}& U\iota(T)\otimes U\iota(S)\ar[d]^{U\phi}\\
U\iota(S\otimes T)\ar[r]^{U\iota\sigma} &U\iota(T\otimes S)
}
\]
commutes.  This follows by direct inspection of the definitions: both symmetry maps are determined by the fact that they send morphisms of the form~$S_m\otimes g$ to~$g\otimes S_m$ and of the form~$f\otimes T_n$ to~$T_n\otimes f$.

To see that~$U\iota^\times$ is also lax symmetric monoidal, it suffices to observe that the functor~$\modmulti\to U\iota(E)$ factors through~$U\iota^\times(E)$ and, as observed in the above proof of Theorem~\ref{thm:main}, the binary morphism of permutative categories~$\bfF_S\times\bfF_T\to\bfF_{S\otimes T}$ restricts to a binary morphism~$\bfF_S^\times \times \bfF_T^\times\to \bfF_{S\otimes T}^\times$.
\end{proof}

\begin{remark}
We offer here an alternative argument to show the lax symmetric monoidality of both of the two functors~\hbox{$U\iota\colon\mathbf{Lawvere}\to \modmodules$} and~\hbox{$U\iota^\times\colon\mathbf{Lawvere}\to\modmodules$}. As discussed in~\cite[Sec.~3]{EM2}, a lax symmetric monoidal functor between symmetric monoidal categories is simply a map between their underlying (symmetric) multicategories.   Hence, it suffices to show that~$U\iota$ and~$U\iota^\times$ are multifunctors between the underlying multicategories of~$\mathbf{Lawvere}$ and~$\modmodules$.  In the commutative diagram
\[
\xymatrix{
&\mathbf{Lawvere} \ar[dl]_{\iota}\ar[rd]^{\iota^\times}\\
\mathbf{PermCat}\ar[rr]_{(-)^\times}\ar[d]_{U} && 
\mathbf{PermCat}\ar[d]^{U}\\
\modmodules && \modmodules
}
\]
the maps~$U$ are multifunctors~\cite[Lem.~III.10.2.14]{JY} and we have already observed that~$(-)^\times$ is a multifunctor.  Hence both composites~$\mathbf{Lawvere}\to\modmodules$ are multifunctors if~$\iota$ is. Elmendorf--Mandell's work (\cite[Thm.~1.1]{EM1} or the results of \cite{EM2} and \cite{JY} stated as Theorem \ref{EM2mainthrm} above) then implies that  K-theory of Lawvere theories, which is given by the composite
\[ 
\mathbf{Lawvere}\overset{\iota^\times}{\longrightarrow} 
\mathbf{PermCat}\overset{U}{\longrightarrow} 
\modmodules\overset{\mathrm{K}}{\longrightarrow} 
\mathbf{Spectra},
\]
is multiplicative.  

Multifunctoriality of~$\iota$ requires that for any map of Lawvere theories~$S_1\otimes \dots\otimes S_k\to T$, we must have a~$k$--linear functor of permutative categories~$\bfF_{S_1}\times \dots \times \bfF_{S_k}\to \bfF_{T}$.  The universality of Kronecker products means we can reduce the construction of any such map to constructing a multilinear functor~$\bfF_{S_1}\times \dots \times\bfF_{S_k}\to \bfF_{S_1\otimes \dots \otimes S_k}$, extending our construction of the bilinear functor~\hbox{$\bfF_S\times \bfF_T\to \bfF_{S\otimes T}$} from Theorem~\ref{thm:lax}.
\end{remark}

\begin{remark}
Phrased~$\infty$--categorically, this final description of multiplicativity of K-theory simply comes down to showing that~$\iota\colon \mathbf{Lawvere}\to \mathbf{PermCat}$ is a map of~$\infty$--operads, and hence the composites~$U\iota$ and~$U\iota^\times$ are as well.  Both the domain and codomain of these composites are~$\infty$--operads coming from actual symmetric monoidal categories, and so one can describe maps of~$\infty$--operads as straightforward lax symmetric monoidal functors.  Note, however, that in comparing Lawvere theories and multicategories, we naturally pass through~$\mathbf{PermCat}$, which simply isn't a symmetric monoidal category.  Hyland and Power \cite{HylandPower2002} show that it only has a ``weak'' or ``pseudo'' monoidal structure, and the context of~$\infty$--operads or multicategories is one way of providing elbow room for this weak structure.  In fact, many of the subtle issues at the heart of multiplicative K-theory can be attributed to the need to consider a weak monoidal structure when thinking about permutative categories.
\end{remark}

These remarks bring us to a peak of abstraction in our thinking about the multiplicativity of K-theory of Lawvere theories.  In the next section, we return to the down-to-earth realm of applications of the concrete maps that multiplicativity produces.

%%%

\section{Monoids in the category of Lawvere theories}\label{sec:monoids}

This section contains a brief but complete discussion of monoids in the category of Lawvere theories. 

\begin{definition}
A {\it monoidal} Lawvere theory~$T$ is a monoid object in the symmetric monoidal category of Lawvere theories.
\end{definition}

Which Lawvere theories~$T$ support such monoidal structures, and how many?

\begin{proposition}
Any Lawvere theory~$T$ supports at most one monoidal structure. If it supports one, that monoidal structure is necessarily commutative.
\end{proposition}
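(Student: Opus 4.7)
The plan is to exploit the universal property of the Kronecker product, together with the strict $1$-categorical nature of $\mathbf{Lawvere}$ guaranteed by Proposition~\ref{prop:prop}, to pin down both the unit and the multiplication of any prospective monoid structure on $T$. The first step is to observe that $E$ is the initial object of $\mathbf{Lawvere}$: by Definition~\ref{def:morphism}, a morphism $E\to T$ is a sum-preserving functor $\bfE\to\bfF_T$ under $\bfE$, and the only such functor is $\rmF_T$ itself. Consequently the unit $\eta\colon E\to T$ of any monoid structure is forced, and the only remaining question is whether a multiplication $\mu\colon T\otimes T\to T$ exists and, if so, how rigid it is.

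Next I would let $\iota_1,\iota_2\colon T\to T\otimes T$ denote the two canonical inclusions, obtained from $\eta$ and the unitors via $\iota_1=(\id_T\otimes\eta)\circ\rho_T^{-1}$ and $\iota_2=(\eta\otimes\id_T)\circ\lambda_T^{-1}$. The left and right unit axioms for $(T,\mu,\eta)$ translate immediately into $\mu\circ\iota_1=\id_T=\mu\circ\iota_2$. Now the defining universal property of $T\otimes T$ (Definition~\ref{defnKroneckerprod}) says that a morphism $T\otimes T\to U$ of Lawvere theories is the same data as an ordered pair of morphisms $T\to U$ whose images commute in $U$. Applied to $U=T$, this forces the two restrictions of $\mu$ to be $\id_T$, and the universal property then asserts that any such $\mu$ is unique. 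Its existence amounts to the identity on $T$ commuting with itself in the sense of the square in Definition~\ref{defnKroneckerprod}, i.e., to $T$ being a commutative theory in the classical sense.

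For commutativity of $(T,\mu,\eta)$ I would invoke the symmetry $\sigma\colon T\otimes T\to T\otimes T$ of the Kronecker product, which satisfies $\sigma\circ\iota_1=\iota_2$ and $\sigma\circ\iota_2=\iota_1$. Then $(\mu\circ\sigma)\circ\iota_1=\mu\circ\iota_2=\id_T$ and similarly $(\mu\circ\sigma)\circ\iota_2=\id_T$, so the uniqueness established in the previous step yields $\mu\circ\sigma=\mu$, which is exactly the commutativity axiom for $(T,\mu,\eta)$.

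I do not anticipate a genuinely hard step: the entire argument is an Eckmann--Hilton-style observation, tailored to the facts that $E$ is initial in $\mathbf{Lawvere}$ and that every morphism out of $T\otimes T$ is pinned down by its two restrictions. Thanks to Proposition~\ref{prop:prop}, equality of morphisms of Lawvere theories is a mere property rather than extra data, so no $2$-categorical coherences need to be inspected along the way; what looks like it should be a tedious verification simply collapses.
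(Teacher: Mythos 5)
Your proof is correct and rests on the same key observation as the paper's: the unit axioms force both restrictions of $\mu$ along the canonical inclusions $T\to T\otimes T$ to be the identity, which pins $\mu$ down completely. The only difference is one of phrasing --- the paper expresses this rigidity on the level of models (a morphism $T\otimes T\to T$ equips every $T$--model with two commuting $T$--structures, which the unit axioms force to coincide with the original one), whereas you work directly with the universal property of the Kronecker product; these are equivalent by the definition of $S\otimes T$, and your explicit derivation of $\mu\circ\sigma=\mu$ from the symmetry is a welcome elaboration of the paper's ``follows immediately.''
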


\begin{proof}
Since the monoidal unit~$E$, the Lawvere theory of sets, is the initial object in the category of Lawvere theories, every Lawvere theory~$T$ has a canonical morphism~$E\to T$ from the monoidal unit. Therefore, the question is: when does there exist a morphism~\hbox{$T\otimes T\to T$} that turns~$T$ into a monoid object in the category of Lawvere theories?

Suppose we have a morphism~$T\otimes T\to T$. Then every~$T$--model determines a~$T\otimes T$--model. What are these two potentially new~$T$--structures on a given~$T$--model? The unit axiom implies that both agree with the old structure. Thus, such a multiplication is automatically unique: the morphism~\hbox{$T\otimes T\to T$} is necessarily inverse to the morphism~$T\to T\otimes T$ given by the~(left or right) unit. The second statement now follows immediately too.
\end{proof}

We see that being a monoidal Lawvere theory is a {\it property}, not a {\it structure}, and theories that have this property are also called {\it commutative}. Such structures have been considered by Freyd~\cite{Freyd}, Kock~\cite{Kock1970, Kock1971, Kock1972} and, much more recently, in Durov's thesis~\cite{Durov}.

\begin{examples}
The theory of modules over any given commutative ring is monoidal. For this reason, monoidal Lawvere theories can be seen as generalizations of commutative rings. A non-ring example of a monoidal Lawvere theory is given by the theory of sets with an action of a fixed abelian group~$A$. A proposition of Freyd~\cite[p.~94]{Freyd} shows that a monoidal theory has at most one constant, and that if it has a binary operation with zero, then it is the theory of modules over a commutative semi-ring.
\end{examples}

\begin{theorem}\label{thm:commutative}
If~$T$ is a monoidal Lawvere theory, then its algebraic~K-theory spectrum~$\rmK(T)$ is a commutative ring spectrum.
\end{theorem}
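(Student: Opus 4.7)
The plan is to derive this as a formal consequence of Corollary~\ref{laxmonoidalforlawvere}. A commutative ring spectrum is precisely a commutative monoid object in the symmetric monoidal category $(\mathbf{Spectra}, \wedge, \bbS)$, and it is a general categorical fact that any lax symmetric monoidal functor carries commutative monoid objects to commutative monoid objects. Since the preceding proposition shows that a monoidal Lawvere theory $T$ is automatically a commutative monoid object in $(\mathbf{Lawvere}, \otimes, E)$, applying the lax symmetric monoidal functor $\rmK$, together with the identification $\rmK(E) \simeq \bbS$ recorded at the end of Section~\ref{sec:K}, will immediately produce the required ring spectrum structure.

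Explicitly, I would let $\mu\colon T \otimes T \to T$ denote the multiplication and $\eta\colon E \to T$ the (necessarily canonical) unit of the monoidal structure on $T$. The multiplication on $\rmK(T)$ is then defined as the composite
\[
\rmK(T) \wedge \rmK(T) \longrightarrow \rmK(T \otimes T) \xrightarrow{\rmK(\mu)} \rmK(T),
\]
where the first arrow is the lax monoidal structure map of Theorem~\ref{thm:main}, and the unit map is the composite
\[
\bbS \simeq \rmK(E) \xrightarrow{\rmK(\eta)} \rmK(T).
\]
Associativity, unitality, and commutativity of this structure each follow by pasting a single coherence diagram for the lax symmetric monoidal functor $\rmK$ against the corresponding axiom for the monoid $T$. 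For instance, the commutativity square for $\rmK(T) \wedge \rmK(T)$ decomposes into the symmetry-compatibility square of the lax symmetric monoidal structure of $\rmK$ glued to $\rmK$ applied to the commutativity square of $\mu$; unitality uses the fact, recorded in Theorem~\ref{multiplicativektheory}, that the map of spectra induced by the left unit bilinear functor is the left unit isomorphism for $\wedge$.

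I do not anticipate a genuine obstacle: the substantive work has been packaged into Corollary~\ref{laxmonoidalforlawvere}, and once one has a lax symmetric monoidal functor between honest symmetric monoidal $1$-categories, the transfer of (commutative) monoid structure is entirely formal. The only point worth remarking on is that this formal transfer happens at the strict level rather than merely up to homotopy, which is legitimate here because the lax symmetric monoidal structure on $\rmK$ from Corollary~\ref{laxmonoidalforlawvere} is a strict structure valued in the pointwise symmetric monoidal category $(\mathbf{Spectra}, \wedge, \bbS)$ furnished by the Elmendorf--Mandell framework.
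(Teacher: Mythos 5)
Your proposal is correct and follows essentially the same route as the paper, which likewise deduces the result immediately from the multiplicativity of K-theory on Lawvere theories (Theorem~\ref{thm:main} and Corollary~\ref{laxmonoidalforlawvere}); you have simply spelled out the standard fact that lax symmetric monoidal functors preserve commutative monoid objects, which the paper leaves implicit. No gaps.
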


\begin{proof}
This follows immediately from the multiplicative properties of the~K-theory functor for Lawvere theories as stated in Theorem~\ref{thm:main}.
\end{proof}

%%%

\section{Assembly for Lawvere theories}\label{sec:Assembly}

In this section, we apply our results from Section~\ref{sec:pairings} to produce assembly maps in the general context of Lawvere theories.

%%%

%%%

\begin{theorem}\label{thm:abelian_assembly}
For each Lawvere theory~$T$, the map
\begin{equation}\label{eq:abelian_assembly}
\rmK(\bbZ)\wedge\rmK(T)\longrightarrow\rmK(\bbZ\otimes T)
\end{equation}
 of K-theory spectra arising from Theorem \ref{thm:main} is the unique~$\rmK(\bbZ)$--linear extension of the morphism~$\rmK(T)\to\rmK(\bbZ\otimes T)$ induced by the linearization~$T\to \bbZ\otimes T$ of the Lawvere theory~$T$ as defined in Example~\ref{ex:linearization}.
\end{theorem}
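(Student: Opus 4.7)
The plan is to exploit that $\rmK$ is lax symmetric monoidal on Lawvere theories (Corollary~\ref{laxmonoidalforlawvere}) together with the fact that $\bbZ$ is a commutative monoid in the symmetric monoidal category of Lawvere theories, and reduce the claim to the free--forgetful adjunction for modules over a commutative ring spectrum.

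First I would upgrade the two sides of~\eqref{eq:abelian_assembly} to $\rmK(\bbZ)$--modules. Since $\bbZ$, the theory of abelian groups, is a monoidal Lawvere theory in the sense of Section~\ref{sec:monoids}, it is in particular a commutative monoid in Lawvere theories; by Theorem~\ref{thm:commutative}, $\rmK(\bbZ)$ is a commutative $\bbS$--algebra. For any Lawvere theory $T$, the theory $\bbZ\otimes T$ is canonically a module over $\bbZ$ in Lawvere theories, with structure map
\[
\bbZ\otimes(\bbZ\otimes T)\xto{\;\alpha\;}(\bbZ\otimes\bbZ)\otimes T\xto{\mu\otimes\id_T}\bbZ\otimes T,
\]
where $\mu$ is the multiplication of $\bbZ$ and $\alpha$ the associator. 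Applying the lax symmetric monoidal functor $\rmK$ and using the lax structure maps turns $\rmK(\bbZ\otimes T)$ into a $\rmK(\bbZ)$--module spectrum, and the source $\rmK(\bbZ)\wedge\rmK(T)$ is the free $\rmK(\bbZ)$--module on $\rmK(T)$.

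Next I would check that the map~\eqref{eq:abelian_assembly} is $\rmK(\bbZ)$--linear. This is formal: it follows from the compatibility of the lax monoidal structure with the associators, which is part of the definition of a lax symmetric monoidal functor. Concretely, one writes down the two routes from $\rmK(\bbZ)\wedge\rmK(\bbZ)\wedge\rmK(T)$ to $\rmK(\bbZ\otimes T)$ and invokes the hexagon/associativity axiom for $\rmK$, together with $\rmK$ applied to the associator and the multiplication $\mu$.

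Then I would restrict along the unit. Precomposing~\eqref{eq:abelian_assembly} with the unit $\bbS\wedge\rmK(T)\to\rmK(\bbZ)\wedge\rmK(T)$ and the equivalence $\bbS\simeq\rmK(E)$ gives a map $\rmK(T)\to\rmK(\bbZ\otimes T)$. By naturality of the lax structure map applied to the unit morphism $E\to\bbZ$ of Lawvere theories and to the identity on $T$, this composite equals $\rmK$ applied to the composition $T\cong E\otimes T\to\bbZ\otimes T$, which is exactly the linearization morphism of Example~\ref{ex:linearization}.

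Finally, uniqueness is the free--module adjunction: any $\rmK(\bbZ)$--linear map out of the free $\rmK(\bbZ)$--module $\rmK(\bbZ)\wedge\rmK(T)$ into a $\rmK(\bbZ)$--module is determined by its restriction to $\rmK(T)$. The hard part will be the bookkeeping in the first two steps: writing the coherence diagrams that promote the abstract ``lax symmetric monoidal'' property into concrete $\rmK(\bbZ)$--module structures and into linearity of~\eqref{eq:abelian_assembly}. Once those diagrams are in hand, the identification of the restriction and the uniqueness conclusion are both immediate.
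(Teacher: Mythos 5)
Your proposal is correct and follows essentially the same route as the paper: endow $\rmK(\bbZ\otimes T)$ with a $\rmK(\bbZ)$--module structure via lax symmetric monoidality, use that $\rmK(\bbZ)\wedge\rmK(T)$ is the free $\rmK(\bbZ)$--module on $\rmK(T)$, and identify the restriction along the unit $\bbS\simeq\rmK(E)\to\rmK(\bbZ)$ with $\rmK$ of the linearization $T\cong E\otimes T\to\bbZ\otimes T$ using naturality and the left unitality coherence. Your explicit insistence on verifying $\rmK(\bbZ)$--linearity of the assembly map via the associativity coherence is a point the paper leaves implicit, but it is the same argument.
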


\begin{proof}
Since~$\bbZ$ is a commutative ring, the theory of~$\bbZ$--modules is a commutative monoid in the symmetric monoidal category~$\mathbf{Lawvere}$ of Lawvere theores, as in Section~\ref{sec:monoids}.  Furthermore the theory~$\bbZ\otimes T$ is a module over the theory of~$\bbZ$--modules.  Since the functor~$\rmK$ is lax symmetric monoidal by Corollary~\ref{laxmonoidalforlawvere},
 it follows that~$\rmK(\bbZ)$ is a commutative ring spectrum and the spectrum~$\rmK(\bbZ\otimes T)$ is a module spectrum over it. Now, the smash product~$\rmK(\bbZ)\sma\rmK(T)$ is the free~$\rmK(\bbZ)$--module spectrum on~$\rmK(T)$, and so there is a unique dashed extension of the linearization map~$\rmK(T)\to\rmK(\bbZ\otimes T)$ to a horizontal map making the following diagram commute:
\[
\xymatrix{
\rmK(\bbZ)\sma\rmK(T)
\ar@{-->}[r]& 
\rmK(\bbZ\otimes T)\\
\bbS\sma\rmK(T) 
\ar[r]^-\sim\ar[u]^-{\text{unit}\,\sma\,\rmK(T)}&
\rmK(T)
\ar[u]_{\rmK(\text{lin.})}
}
\]
In this diagram, the left vertical map comes from the unit map~$\bbS\to\rmK(\bbZ)$, which is realized by applying~K-theory to the map~$E\to\bbZ$ of Lawvere theories and recalling that the unit~$\bbS\to\rmK(E)$ is an equivalence. Therefore, to show the claim, we have to verify that the map from Theorem \ref{thm:main} in place of the dashed arrow makes the square above commute. This follows from the commutativity of the diagram 
\begin{equation}\label{diagramforKZuniqueness}
\xymatrix{
\rmK(\bbZ)\sma\rmK(T)\ar[r] &\rmK(\bbZ\otimes T)\\
\rmK(E)\sma\rmK(T)\ar[r]\ar[u] &\rmK(E\otimes T)
\ar[d]_-\sim 
\ar[u]^-{\rmK(\text{unit}\otimes T)}\\
\bbS\sma\rmK(T)\ar@<2ex>@(l,l)[uu]^-{\text{unit}\,\sma\,\rmK(T)}
\ar[u]^-\sim\ar[r]_-\sim &\rmK(T).
\ar@(r,r)[uu]_-{\rmK(\text{lin.})}
}
\end{equation}
Here the top two horizonal maps are those of Theorem~\ref{thm:main} and  the naturality of these maps implies that the top square commutes.  The bottom square  is the left unitality condition of the lax symmetric monoidal functor~$\rmK$ and hence commutes. The commutativity of the right part of the diagram
follows from applying the functor~$\rmK$ to the commutative diagram 
\[
\xymatrix@C=4em{
T\ar@(dr,dl)[rr]_-{\text{lin.}} &\ar[l]_-\cong E\otimes T \ar[r]^-{\ \text{unit}\otimes T\ }& \bbZ\otimes T
}
\]
of Lawvere theories. To see that this diagram commutes, note that for any Lawvere theory~$S$, the unit~$E\otimes T\to S\otimes T$ arises from the forgetful functor taking~$T$--models in~$S$--models to~$T$-models in~$E$--models, i.e.~$T$--models in sets.  So the linearization~$T\to \bbZ\otimes T$ arises from the same forgetful functor, once we observe that~$T$--models in sets are just~$T$--models.  The commutativity of the left part of the diagram follows from the discussion of the unit in the paragraph before the diagram~\eqref{diagramforKZuniqueness}.
\end{proof}
\begin{definition}
The map~\eqref{eq:abelian_assembly} is the {\it assembly map} for the Lawvere theory~$T$.
\end{definition}

We explain the relationship to the classical assembly maps in the following section. Before doing so, we determine the assembly maps for the theories of Cantor algebras and Boolean algebras, which can be done directly with no more background than the definition~\eqref{eq:abelian_assembly}. We see that the assembly map can, in general, be next to anything, from an equivalence to the zero morphism.

%%%

\subsection{Cantor algebras}\label{sec:Cantor}

Let~$a\geqslant 2$ be an integer. A {\em Cantor algebra} of arity~$a$ is a set~$X$ together with a bijection~$X^a\to X$. The Cantor algebras of arity~$a$ are the models for a Lawvere theory~$\Cantor_a$, and its algebraic~K-theory has been computed in~\cite{Szymik+Wahl}:
\begin{equation}\label{eq:Moore}
\rmK(\Cantor_a)\simeq\bbS/(a-1),
\end{equation}
the Moore spectrum mod~\hbox{$a-1$}. In particular, the spectrum~$\rmK(\Cantor_2)$ is contractible. 

\begin{remark}
We note that the definition makes sense for~\hbox{$a=1$} as well. In that case, we have an isomorphism between~$\Cantor_1$ and the Lawvere theory~$\bbZ$--Sets of permutations, and the equivalence~\eqref{eq:Moore} is still true, as we shall see in Example~\ref{ex:permutations}.
\end{remark}

\begin{theorem}\label{thm:Leavitt}
The assembly map~\eqref{eq:abelian_assembly} 
\[
\rmK(\bbZ)\wedge\rmK(\Cantor_a)\longrightarrow\rmK(\bbZ\otimes\Cantor_a)
\]
for the Lawvere theory~$\Cantor_a$ of Cantor algebras of arity~$a$ is an equivalence. 
\end{theorem}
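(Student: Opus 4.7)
The plan is to compute both sides of the assembly map explicitly and then check that the map between them is the induced equivalence, using Theorem~\ref{thm:abelian_assembly} to pin down what the map must be.

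First, I would rewrite the source using \eqref{eq:Moore}. Since $\rmK(\Cantor_a)\simeq\bbS/(a-1)$ is the Moore spectrum, smashing with $\rmK(\bbZ)$ gives
\[
\rmK(\bbZ)\wedge\rmK(\Cantor_a)\simeq\rmK(\bbZ)\wedge\bbS/(a-1)\simeq\rmK(\bbZ)/(a-1),
\]
i.e., the cofiber of multiplication by $a-1$ on $\rmK(\bbZ)$.

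Next, I would identify the target. By Example~\ref{ex:linearization}, $\bbZ\otimes\Cantor_a$ is the theory of modules over the endomorphism ring $R=\bfM_{\bbZ\otimes\Cantor_a}(L(T_1),L(T_1))$ of the linearization of the free Cantor algebra $T_1=(\Cantor_a)_1$. The defining relation of a Cantor algebra of arity $a$ is the isomorphism $T_1\cong T_1+\dots+T_1$ ($a$ summands) in $\bfF_{\Cantor_a}$; linearization is a left adjoint and therefore preserves sums, so we obtain $L(T_1)\cong L(T_1)^{\oplus a}$ as $R$-modules. This is precisely the defining relation of the integral Leavitt algebra $L_a$ of type $(1,a)$, so $R\cong L_a$ and $\rmK(\bbZ\otimes\Cantor_a)\simeq\rmK(L_a)$. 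Then I would invoke (an integral version of) the argument of~\cite{Szymik+Wahl} that underlies \eqref{eq:Moore}, which yields $\rmK(L_a)\simeq\rmK(\bbZ)/(a-1)$; equivalently, the cofiber sequence $\rmK(\bbZ)\xrightarrow{a-1}\rmK(\bbZ)\to\rmK(L_a)$ coming from the defining relation $[L_a]=a[L_a]$.

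Finally, I would check that the assembly map realizes the identification of the two resulting copies of $\rmK(\bbZ)/(a-1)$. By Theorem~\ref{thm:abelian_assembly}, the assembly map is the unique $\rmK(\bbZ)$-linear extension of the linearization map $\rmK(\Cantor_a)\to\rmK(\bbZ\otimes\Cantor_a)$. Under the identifications above this linearization is the mod-$(a-1)$ reduction of the unit $\bbS\to\rmK(\bbZ)$, i.e., the map $\bbS/(a-1)\to\rmK(\bbZ)/(a-1)$, and its $\rmK(\bbZ)$-linear extension is the canonical equivalence $\rmK(\bbZ)\wedge\bbS/(a-1)\simeq\rmK(\bbZ)/(a-1)$. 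This forces the assembly map itself to be an equivalence.

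The main obstacle is the integral computation $\rmK(L_a)\simeq\rmK(\bbZ)/(a-1)$ in Step~2: the cited result~\eqref{eq:Moore} concerns the sphere-level statement for Cantor algebras themselves rather than their linearizations, so one must either extract the $\bbZ$-linear analogue from the techniques of~\cite{Szymik+Wahl} or invoke a direct cofiber-sequence calculation for the integral Leavitt algebra. Once that computation is in hand, the rest of the argument is essentially formal from Theorem~\ref{thm:abelian_assembly} and the fact that smashing with $\bbS/(a-1)$ computes the mod-$(a-1)$ reduction.
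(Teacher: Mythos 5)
Your strategy coincides with the paper's: identify the source as the cofiber of multiplication by~$a-1$ on~$\rmK(\bbZ)$ via~\eqref{eq:Moore}, identify~$\bbZ\otimes\Cantor_a$ as the theory of modules over the Leavitt algebra~$\rmL_a$, compute~$\rmK(\rmL_a)$, and match the two cofibers. The one genuine gap is exactly the step you flag yourself: the equivalence~$\rmK(\rmL_a)\simeq\rmK(\bbZ)/(a-1)$. This does not follow by a ``direct cofiber-sequence calculation'' from the defining relation~$L(T_1)\cong L(T_1)^{\oplus a}$; an isomorphism of modules (equivalently, the relation~$[\rmL_a]=a[\rmL_a]$ in~$K_0$) constrains~$\pi_0$ but does not by itself produce a cofiber sequence of spectra~$\rmK(\bbZ)\xrightarrow{a-1}\rmK(\bbZ)\to\rmK(\rmL_a)$, and it is not evident that the arguments of~\cite{Szymik+Wahl}, which concern Higman--Thompson groups and the sphere spectrum, transfer to the linear setting. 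The paper closes this gap by citing the computation of the K-theory of Leavitt path algebras by Ara, Brustenga and Corti\~nas~\cite{ABC}, which supplies precisely that cofiber sequence; without such a citation or a substitute argument, your Step~2 is an assertion rather than a proof. Your final step---using Theorem~\ref{thm:abelian_assembly} to pin down the assembly map as the unique~$\rmK(\bbZ)$-linear extension of the linearization and thereby to identify it with the comparison of cofibers---is sound, and is in fact a slightly more careful rendering of the paper's closing remark that ``the obvious comparison maps are equivalences.''
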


\begin{proof}
We start from~\cite{Szymik+Wahl}, where the algebraic~K-theory of~$\Cantor_a$ is identified with the Moore spectrum~$\bbS/(a-1)$. That Moore spectrum is the the cofiber of multiplication with~$a-1$ on the sphere spectrum, so that~$\rmK(\bbZ)\wedge\rmK(\Cantor_a)$ is the cofiber of multiplication with~$a-1$ on~$\rmK(\bbZ)$.

Then we have the observation that~$\bbZ\otimes\Cantor_a$ is the theory of modules over the Leavitt algebra~$\rmL_a$, the quotient of the free associative ring with unit on~$2a$ generators, given as two vectors~\hbox{$R=(R_1,\dots,R_a)$} and~\hbox{$C=(C_1,\dots,C_a)$}, modulo the ideal defined by the~$a^2+1$ relations that ensure that the two square matrices~$R^\mathrm{t}C$ and~$RC^\mathrm{t}$ are the identity matrices. In other words, the modules over the ring~$\rmL_a$ are the abelian groups~$M$ together with linear bijections~$M^a\to M$, and these are precisely the models for~$\bbZ\otimes\Cantor_a$. 

Finally, the algebraic~K-theory~$\rmK(\rmL_a)$ has been computed in~\cite{ABC}, and the result shows that it is also the cofiber of multiplication with~$a-1$ on~$\rmK(\bbZ)$. The obvious comparison maps are equivalences.
\end{proof}

\begin{remark}\label{rem:hdHT}
Brin~\cite{Brin} has introduced the {\em higher-dimensional Higman--Thompson groups}. These are the automorphism groups of the free models for the Lawvere theories
\[
\Cantor_{a(1)}\otimes\Cantor_{a(2)}\otimes\dots\otimes\Cantor_{a(n)},
\]
for a finite sequence of integers~$a(j)\geqslant 2$~(see~\cite{M-PN, DM-P, M-PMN, FN}). It follows from our earlier work~\cite[Thm.~2.7]{Bohmann+Szymik} that the stable homology of these groups is described by the algebraic~K-theory spectrum of that Kronecker product. From Theorem~\ref{thm:main}, we get a map 
\begin{equation}\label{eq:higher}
\rmK(\Cantor_{a(1)})\wedge\rmK(\Cantor_{a(2)})\wedge\dots\wedge\rmK(\Cantor_{a(n)})\longrightarrow\rmK(\Cantor_{a(1)}\otimes\dots\otimes\Cantor_{a(n)})
\end{equation}
from the smash product of the algebraic~K-theory spectra into it. The homotopy type of this smash product can be worked out, because the algebraic~K-theory spectra are Moore spectra by~\cite{Szymik+Wahl}, but it is not known whether the map~\eqref{eq:higher} is an equivalence.  In fact, it is not known whether the spectra are equivalent at all.
\end{remark}

%%%

\subsection{Boolean algebras}\label{sec:Boole}

There are Lawvere theories for which the assembly map is trivially trivial because the target is contractible. For instance, in any abelian~$T$--model~$M$, all constants of~$T$ need to be equal, because there is a unique homomorphism~$0=M^0\to M$ of abelian groups. This happens for rings with unit~($1=0$ implies~$a=1\cdot a=0\cdot a=0$ for all~$a$), but also for Boolean algebras:

\begin{proposition}\label{prop:abelianBoole}
Any abelian group object in the category of Boolean algebras is trivial.
\end{proposition}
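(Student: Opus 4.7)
The plan is to directly invoke the observation made in the paragraph immediately preceding the statement, and then specialize it to Boolean algebras. For any Lawvere theory $T$, the zero of the abelian group structure on an abelian group object $A$ in $T$-models is realized by a $T$-model morphism from the terminal $T$-model $1$ (equivalently, the empty product $A^0$) to $A$. Because such a morphism must preserve every constant of the theory, its value on the unique element of $1$ is forced to equal $c_A$ for every constant $c$ of $T$ simultaneously. Hence all constants of $T$ must coincide in $A$.

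The next step is to specialize to the theory of Boolean algebras, which has two distinguished constants, namely $0$ and $1$. The previous step therefore forces $0_A = 1_A$ as elements of $A$.

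The final step is an elementary Boolean algebra computation: assuming $0_A = 1_A$, one observes that for every $x \in A$,
\[
x = x \wedge 1_A = x \wedge 0_A = 0_A,
\]
so $A$ has exactly one element and is thus the terminal (trivial) Boolean algebra.

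I do not foresee any substantive obstacle; the argument is a one-line specialization of the general fact about abelian $T$-models noted just before the statement, together with a well-known collapse property of Boolean algebras. The only point worth flagging is that the statement implicitly admits the one-element Boolean algebra as an object of the category, which is the standard convention in universal algebra and in the Lawvere-theoretic setting used throughout the paper; otherwise the conclusion would have to be rephrased as the non-existence of abelian group objects.
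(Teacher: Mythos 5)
Your proposal is correct and follows essentially the same route as the paper: both use the observation from the preceding paragraph that all constants of the theory must coincide in an abelian group object, and then conclude via the identity $x = 1\wedge x = 0\wedge x = 0$. The only cosmetic difference is that you phrase the collapse of constants via the unit map out of the terminal $T$--model, whereas the paper phrases it via the uniqueness of abelian group homomorphisms out of $M^0=0$; these are two views of the same compatibility condition.
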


\begin{proof}
We first note that~\hbox{$0\wedge x=0$} and~$1\wedge x=x$ hold in every Boolean algebra. If, in addition, we have~\hbox{$0=1$}, then this implies~\hbox{$x=1\wedge x=0\wedge x=0$} for all~$x$, and we are done.
\end{proof}

\begin{theorem}\label{thm:Boole}
The assembly map for the Lawvere theory~$\Boole$ is zero and, in particular, not rationally injective.
\end{theorem}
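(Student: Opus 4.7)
The plan is to deduce both assertions from Proposition~\ref{prop:abelianBoole} by showing that the target $\rmK(\bbZ\otimes\Boole)$ of the assembly map is already contractible, so the map is null-homotopic for trivial reasons, and then to witness rational non-injectivity on $\pi_0$.

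First, I would invoke Proposition~\ref{prop:abelianBoole}: every abelian group object in Boolean algebras is the one-element Boolean algebra. Since $(\bbZ\otimes\Boole)$--models are by construction abelian group objects in $\Boole$, it follows that $\bbZ\otimes\Boole$ admits only the trivial model up to isomorphism. In particular, every free model $(\bbZ\otimes\Boole)_r$ is the one-point model, and the formula $\bfM_T(T_r,X)\cong X^r$ recalled in Section~\ref{sec:theories} shows that every hom-set in $\bfF_{\bbZ\otimes\Boole}$ is a singleton. Hence the permutative groupoid $\bfF_{\bbZ\otimes\Boole}^\times$ is contractible, and so is its K-theory spectrum $\rmK(\bbZ\otimes\Boole)$. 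The assembly map~\eqref{eq:abelian_assembly} therefore lands in a contractible spectrum and is null-homotopic; this gives the first claim.

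For the rational non-injectivity, I would exhibit a nontrivial rational class in the source. By Remark~\ref{rem:surjection}, the group $\rmK_0(\Boole)$ is a cyclic quotient of $\bbZ$; it is in fact $\bbZ$ itself, because the free Boolean algebras on $r$ generators have (pairwise distinct) cardinalities $2^{2^r}$ and are therefore mutually non-isomorphic, so no identifications occur in the Grothendieck completion. Combined with $\rmK_0(\bbZ)=\bbZ$ and the isomorphism $\pi_0\rmK(\bbZ)\otimes\pi_0\rmK(\Boole)\cong\pi_0(\rmK(\bbZ)\wedge\rmK(\Boole))$ used in the proof of Theorem~\ref{thm:main}, this yields $\pi_0(\rmK(\bbZ)\wedge\rmK(\Boole))\cong\bbZ$. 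This class is rationally nonzero and is sent to $0\in\pi_0\rmK(\bbZ\otimes\Boole)=0$, so the assembly map fails to be injective on $\pi_0\otimes\bbQ$.

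There is no real obstacle here beyond applying Proposition~\ref{prop:abelianBoole}; the rest is bookkeeping about the K-theory of a contractible permutative groupoid and about $\pi_0$ of the smash product of two connective K-theory spectra. The conceptual point is that the entire collapse responsible for the triviality of the assembly map already occurs at the level of Lawvere theories, before K-theory is ever applied.
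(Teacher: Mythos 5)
Your proof is correct and follows essentially the same strategy as the paper's: Proposition~\ref{prop:abelianBoole} forces the target $\rmK(\bbZ\otimes\Boole)$ to be contractible, and the failure of rational injectivity is then detected on $\pi_0$ of the source via $\pi_0(\rmK(\bbZ)\wedge\rmK(\Boole))\cong\rmK_0(\bbZ)\otimes\rmK_0(\Boole)\cong\bbZ$. The only difference is cosmetic: the paper cites the computation of $\rmK(\Boole)$ from the prequel to get $\rmK_0(\Boole)\cong\bbZ$, whereas you derive this directly from the pairwise-distinct cardinalities $2^{2^r}$ of the finitely generated free Boolean algebras, which is a valid self-contained shortcut.
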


\begin{proof}
It follows from Proposition~\ref{prop:abelianBoole} that~$\bbZ\otimes\Boole$ is the theory of modules over the trivial ring, and the algebraic~K-theory spectrum~$\rmK(\bbZ\otimes\Boole)\simeq\star$ is contractible. 

On the other hand, we can use the computation of the algebraic K-theory of the theory of Boolean algebras in~\cite[Cor.~5.2]{Bohmann+Szymik}. The result implies that the source~\hbox{$\rmK(\bbZ)\wedge\rmK(\Boole)\not\simeq\star$} of the assembly map is not contractible because of~\hbox{$\pi_0(\rmK(\bbZ)\wedge\rmK(\Boole))\cong\rmK_0(\bbZ)\otimes\rmK_0(\Boole)\cong\bbZ\otimes\bbZ\cong\bbZ$}.
\end{proof}

It is easy to generalize the preceding result from Boolean algebras to~$v$--valued Post algebras as in~\cite[Thm.~5.1]{Bohmann+Szymik}; we omit the details.

%%%

\section{Classical assembly maps via the theories of group actions}\label{sec:classical}

We can now see how to recover one of the classical assembly maps in algebraic~K-theory as a special case of our general assembly map~\eqref{eq:abelian_assembly}.

\begin{theorem}\label{thm:group_assembly}
For each group~$G$, we can identify the assembly map~\eqref{eq:abelian_assembly} with the unique~$\rmK(\bbZ)$--linear morphism \begin{equation}\label{eq:Loday}
\rmK(\bbZ)\wedge\Sigma^\infty_+(\rmB G)\longrightarrow\rmK(\bbZ G)
\end{equation}
between~$\rmK(\bbZ)$--module spectra that extends the map given by realizing the elements of~$G$ as units in the group ring~$\bbZ G$. These maps are natural in the group~$G$.
\end{theorem}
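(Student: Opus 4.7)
The plan is to apply the general assembly result of Theorem~\ref{thm:abelian_assembly} to the Lawvere theory $T$ of $G$-sets, and then to identify both sides of the resulting map with the classical objects appearing in Loday's construction. Given Theorem~\ref{thm:abelian_assembly}, uniqueness as a $\rmK(\bbZ)$-linear extension and naturality in $G$ come for free once the identifications are in place, so the content is entirely in recognizing the source and target, and in matching the Lawvere-theoretic linearization with the natural map induced by the inclusion $G \hookrightarrow (\bbZ G)^\times$ of units.

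The target is the easier side: the free $G$-set on $r$ generators is $G^{\coprod r}$, whose linearization $L(T_1)$ has endomorphism ring $\bbZ G$, so Example~\ref{ex:linearization} identifies $\bbZ \otimes T$ with the Lawvere theory of $\bbZ G$-modules and hence $\rmK(\bbZ \otimes T) = \rmK(\bbZ G)$ in the free-module sense of Section~\ref{sec:K}. For the source, I would identify $\rmK(T) \simeq \Sigma^\infty_+ \rmB G$ via a wreath-product version of Barratt--Priddy--Quillen: the automorphism group of the free $G$-set on $r$ generators is the wreath product $G \wr \Sigma_r$, so $\bfF_T^\times \simeq \coprod_r \rmB(G \wr \Sigma_r)$ as a symmetric monoidal groupoid, whose group completion is well-known to be $\Sigma^\infty_+ \rmB G$. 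Under these two identifications, the linearization map on K-theory is induced on classifying spaces by the compatible family of homomorphisms $G \wr \Sigma_r \to \GL_r(\bbZ G)$ sending basis elements of free $G$-sets to basis elements of free $\bbZ G$-modules; for $r = 1$ this restricts to the inclusion $G \hookrightarrow (\bbZ G)^\times$, which is exactly the map underlying Loday's assembly.

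The main obstacle is verifying that this Barratt--Priddy--Quillen identification intertwines the Lawvere-theoretic linearization with Loday's assembly on the nose, and not merely up to some automorphism of $\Sigma^\infty_+ \rmB G$ or of $\rmK(\bbZ G)$. The cleanest route is to observe that both constructions factor through a single map of permutative groupoids $\coprod_r \rmB(G \wr \Sigma_r) \to \coprod_r \rmB \GL_r(\bbZ G)$; applying K-theory then gives the underlying map $\rmK(T) \to \rmK(\bbZ G)$ on both sides, and invoking the uniqueness clause of Theorem~\ref{thm:abelian_assembly} promotes this identification of building blocks to an identification of the full $\rmK(\bbZ)$-linear assembly maps. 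Naturality in $G$ is visible at each stage of the construction and requires no separate argument.
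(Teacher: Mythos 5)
Your proposal is correct and follows essentially the same route as the paper: apply Theorem~\ref{thm:abelian_assembly} to the theory of $G$--sets, identify the source via the wreath-product form of the Barratt--Priddy--Quillen theorem (the paper cites Segal's Prop.~3.6 and the example on pp.~299--300 of \cite{Segal}, which is exactly your $\coprod_r \rmB(G\wr\Sigma_r)$ description), and identify $\bbZ\otimes T$ with $\bbZ G$--modules. If anything, you are more careful than the paper's proof, which does not explicitly verify that the linearization matches the unit inclusion $G\hookrightarrow(\bbZ G)^\times$ under these identifications; your factorization through the map of permutative groupoids $\coprod_r\rmB(G\wr\Sigma_r)\to\coprod_r\rmB\GL_r(\bbZ G)$ supplies that check.
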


This is Loday's version of the assembly map in algebraic~K-theory~\cite{Loday}. There are obvious extensions to other coefficient rings than~$\bbZ$.

\begin{proof}
For any discrete group~$G$, we can consider the Lawvere theory~$T$ of~$G$--sets and apply Theorem~\ref{thm:abelian_assembly}.

The algebraic~K-theory of the theory of~$G$--sets is
\[
\rmK(\text{$G$--$\Sets$})\simeq\Sigma^\infty_+(\rmB G),
\]
the suspension spectrum of the classifying space~$\rmB G$~(with a disjoint base point~$+$). This observation is Segal's extension of the Barratt--Priddy--Quillen theorem, given by applying Prop.~3.6 of \cite{Segal} to the example discussed on pages 299--300 of that paper.  This identifies the source of the assembly map~\eqref{eq:abelian_assembly} as $K(\bbZ)\sma\Sigma^\infty_+(\rmB G)$.

As for the target, if~$T$ is the theory of~$G$--sets for a group~$G$, then~$\bbZ\otimes T$ is the theory of abelian groups with a linear~$G$--action. These are precisely the modules over the group ring~$\bbZ G$, giving the target of the assembly map~\eqref{eq:abelian_assembly}. 
\end{proof}

\begin{example}\label{ex:permutations}
The assembly map~\eqref{eq:Loday} is obviously an equivalence for the trivial group~$G=e$. Less obviously, it is also an equivalence when~$G\cong\rmC_\infty$ is infinite cyclic: 
The theory~$\rmC_\infty$--sets is the theory of permutations: a model is a set together with a permutation of that set. The Barratt--Priddy--Quillen--Segal theorem gives
\[
\rmK(\text{$\rmC_\infty$--$\Sets$})\simeq\Sigma^\infty_+(\rmB\rmC_\infty)\simeq\Sigma^\infty_+(\rmS^1)\simeq\bbS\vee\Sigma\bbS.
\]
On the other hand, we have~$\bbZ\rmC_\infty\cong\bbZ[q^{\pm1}]$ and Quillen's work on the algebraic~K-theory of Laurent polynomial rings gives an equivalence
\[
\rmK(\bbZ[q^{\pm1}])\simeq\rmK(\bbZ)\vee\Sigma\rmK(\bbZ)\simeq\rmK(\bbZ)\wedge(\rmS^0\vee\rmS^1)\simeq\rmK(\bbZ)\wedge\Sigma^\infty_+(\rmB\rmC_\infty)
\]
of spectra, see Grayson's paper~\cite{Grayson}.
\end{example}

\begin{remark}
The assembly map~\eqref{eq:Loday} for groups can fail to be an equivalence. For instance, the failure of surjectivity on~$\pi_1$ is measured by the Whitehead group of~$G$, and the Whitehead group is often non-trivial~(take~$G$ of order~$p\geqslant 5$ a prime). By work of B\"okstedt--Hsiang--Madsen~\cite{BHM89,BHM93} on the algebraic~K-theoretic analogue of Novikov's conjecture, it is known that the map~\eqref{eq:Loday} is rationally injective for groups whose integral homology is of finite type. We refer to the surveys cited in the introduction for more recent results in this vein.
\end{remark}

\begin{remark}\label{rem:families}
The frequent failure of Loday's assembly maps to be equivalences has led to considerable efforts to amend it. The most successful attempts modify the source to accommodate information from a family of subgroups larger than the trivial one~(see~\cite{Davis+Lueck}, for instance). It appears to us that a family version of our description is possible, but not without extending the whole set-up to Lawvere theories that are multi-sorted or colored. Since one advantage of the present work is its conceptual simplicity,  adding this layer of complexity is not currently justified by its expected value.
\end{remark}

%%%

There is one important situation where the map~\eqref{eq:main} from Theorem~\ref{thm:main} is an equivalence, and this is useful in computations:

\begin{proposition}\label{prop:rare}
For any groups~$G$ and~$H$, the map
\[
\rmK(G\text{\upshape--Sets})\wedge\rmK(H\text{\upshape--Sets})
\longrightarrow
\rmK(G\text{\upshape--Sets}\otimes H\text{\upshape--Sets})
\]
from Theorem~\ref{thm:main} is an equivalence.
\end{proposition}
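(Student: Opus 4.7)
My plan is to identify both sides of the map via the Barratt--Priddy--Quillen--Segal theorem and then recognize the map of Theorem~\ref{thm:main} as the canonical smash-product pairing of suspension spectra of classifying spaces, which is always an equivalence.

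First, by Example~\ref{ex:tensorGsets}, the Kronecker product \hbox{$G\text{-Sets}\otimes H\text{-Sets}$} is the Lawvere theory of \hbox{$(G\times H)$-sets}. By Segal's extension of the Barratt--Priddy--Quillen theorem invoked in the proof of Theorem~\ref{thm:group_assembly}, we have natural equivalences
\[
\rmK(G\text{-Sets})\simeq\Sigma^\infty_+(\rmB G),\quad \rmK(H\text{-Sets})\simeq\Sigma^\infty_+(\rmB H),\quad \rmK((G\times H)\text{-Sets})\simeq\Sigma^\infty_+(\rmB(G\times H)).
\]
Using the natural identification \hbox{$\rmB(G\times H)\cong\rmB G\times\rmB H$} and the canonical equivalence \hbox{$\Sigma^\infty_+X\wedge\Sigma^\infty_+Y\simeq\Sigma^\infty_+(X\times Y)$} for the suspension spectra of spaces with an added disjoint basepoint, the source and the target become abstractly equivalent.

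Second, I need to identify the specific map from Theorem~\ref{thm:main} with this canonical pairing. Recall that the map arises from the bilinear functor \hbox{$P\colon\bfF_{G\text{-Sets}}^\times\times\bfF_{H\text{-Sets}}^\times\to\bfF_{(G\times H)\text{-Sets}}^\times$} constructed in the proof of Theorem~\ref{thm:main}: on objects it sends $(m,n)\mapsto mn$, and on automorphisms of free models it realizes the evident wreath-product map \hbox{$(G\wr\Sigma_m)\times(H\wr\Sigma_n)\to(G\times H)\wr\Sigma_{mn}$} coming from the action of $G\times H$ on $mn$ copies of itself by block permutations. Under the Barratt--Priddy--Quillen--Segal identification, each of these classifying spaces $\rmB(G\wr\Sigma_m)$ contributes to a $\Gamma$-space model of $\Sigma^\infty_+\rmB G$, and the pairing produced by Elmendorf--Mandell on these $\Gamma$-space models is exactly the standard pairing induced by the product map on classifying spaces.

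The main obstacle is exactly this last identification: verifying that Elmendorf and Mandell's multiplicative K-theory, applied to our specific bilinear functor, recovers the classical smash-product pairing on suspension spectra. The cleanest route is to exploit naturality in $G$ and $H$ together with the fact that both maps agree on the trivial-group case (where the map is the identity $\bbS\wedge\bbS\to\bbS$), induce multiplication on $\pi_0$ (by Theorem~\ref{thm:main}), and are compatible with the unit $\bbS\to\rmK(G\text{-Sets})$ realized by the canonical functor from sets to $G$-sets. These constraints, combined with the fact that $\Sigma^\infty_+$ is the left adjoint to $\Omega^\infty$ and hence is symmetric monoidal, pin down the pairing up to homotopy and show it coincides with the canonical equivalence. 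Once this identification is made, the equivalence \hbox{$\Sigma^\infty_+\rmB G\wedge\Sigma^\infty_+\rmB H\simeq\Sigma^\infty_+(\rmB G\times\rmB H)\simeq\Sigma^\infty_+\rmB(G\times H)$} delivers the conclusion.
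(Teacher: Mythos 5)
Your proposal takes essentially the same route as the paper: identify $G\text{-Sets}\otimes H\text{-Sets}$ with $(G\times H)\text{-Sets}$, apply the Barratt--Priddy--Quillen--Segal equivalence $\rmK(G\text{-Sets})\simeq\Sigma^\infty_+(\rmB G)$, and use $\rmB(G\times H)\simeq\rmB G\times\rmB H$ together with the monoidality of $\Sigma^\infty_+$. The paper's proof in fact stops at these identifications, so your extra paragraph identifying the specific bilinear functor with the canonical pairing goes beyond what the authors write; just be aware that the concluding claim that naturality, the unit conditions, and the $\pi_0$ computation ``pin down the pairing up to homotopy'' is not by itself a rigorous argument (two maps out of $\Sigma^\infty_+(\rmB G\times\rmB H)$ agreeing on the wedge can still differ on the smash), whereas your earlier, more direct observation that the bilinear functor realizes the wreath-product pairing on the $\Gamma$-space models is the sound way to make that identification.
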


\begin{proof}
Recall from Example~\ref{ex:tensorGsets} that the two Lawvere theories~$G\text{\upshape--Sets}\otimes H\text{\upshape--Sets}$ and~$(G\times H)\text{\upshape--Sets}$ are the same. This gives an equivalence
\[
\rmK(G\text{\upshape--Sets}\otimes H\text{\upshape--Sets})\simeq\rmK((G\times H)\text{\upshape--Sets}).
\]
Now we can use the Barratt--Priddy--Quillen--Segal equivalence~$\rmK(G\text{--Sets})\simeq\Sigma_+^\infty\rmB G$ of spectra, for~$G\times H$ instead of~$G$, and the equivalence~\hbox{$\rmB(G\times H)\simeq\rmB(G)\times\rmB(H)$} of classifying spaces which induces the equivalence~\hbox{$\Sigma_+^\infty\rmB(G\times H)\simeq\Sigma_+^\infty\rmB G\wedge\Sigma_+^\infty\rmB H$} of suspension spectra.
\end{proof}

For instance, this result allows us to factor the~$\bbZ$--linear assembly maps for products~$G\times H$ as the~$\bbZ[G]$--linear assembly map for the factor~$H$ and the~$\bbZ$--linear assembly map for the other factor~$G$ smashed with the identity on~$\rmK(H\text{--Sets})$:
\[
\xymatrix@C=3ex{
\rmK(\bbZ)\sma\rmK(G\text{--Sets})\sma\rmK(H\text{--Sets})\ar[r]\ar[d]_-\sim & \rmK(\bbZ\otimes G\text{--Sets})\sma\rmK(H\text{--Sets})\ar[d]\ar@{=}[r] & \rmK(\bbZ[G])\sma\rmK(H\text{--Sets})\ar[d]\\
\rmK(\bbZ)\sma\rmK(G\text{--Sets}\otimes H\text{--Sets})\ar[r] & \rmK(\bbZ\otimes G\text{--Sets}\otimes H\text{--Sets})\ar@{=}[r] & \rmK(\bbZ[G\times H])
}
\]

\begin{remark}
Proposition~\ref{prop:rare} is a rare instance of a class of examples for which the map~\eqref{eq:main} from Theorem~\ref{thm:main} can be shown to be an equivalence. It would be highly desirable to establish more such results in similar situations, such as the one mentioned in Remark~\ref{rem:hdHT}.
\end{remark}

%%%

\section{Non-abelian groups and a nilpotent interpolation}\label{sec:non-abelian}

The algebraic K-theory spectrum~$\rmK(\bbZ)$ of the integers enters our assembly map~\eqref{eq:abelian_assembly} through the Lawvere theory of abelian groups. In this section, we show what happens when we relax the commutativity hypothesis. At the same time, we prove results about the assembly maps for various theories of groups, starting with the following.

\begin{theorem}\label{thm:assembly_for_gr}
The assembly map~\eqref{eq:abelian_assembly} for the theory~$T=\Groups$ of all groups,
\[
\rmK(\bbZ)\wedge\rmK(\Groups)\longrightarrow\rmK(\bbZ\otimes\Groups)
\]
is an equivalence. 
\end{theorem}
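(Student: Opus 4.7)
The plan is to identify both the source and the target of the assembly map with $\rmK(\bbZ)$, and to recognise the assembly map itself as the canonical unit isomorphism exhibiting $\rmK(\bbZ)$ as a free module of rank one over itself.

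First I would identify the target. The Lawvere theory $\bbZ\otimes\Groups$ represents abelian group objects in the category of groups, but the Eckmann--Hilton argument forces any such structure to collapse: the two group operations coincide and are commutative. Hence the category of $\bbZ\otimes\Groups$-models is the category of abelian groups, and since a Lawvere theory is determined by its category of algebras together with the forgetful functor to sets, this produces an isomorphism of Lawvere theories $\bbZ\otimes\Groups\cong\bbZ$. Under this isomorphism, the linearization morphism $\Groups\to\bbZ\otimes\Groups$ is nothing other than the abelianization morphism $\Groups\to\bbZ$, so the target $\rmK(\bbZ\otimes\Groups)$ is naturally identified with $\rmK(\bbZ)$.

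Next I would identify the source by invoking Galatius's theorem~\cite{Galatius}. His computation of the stable homology of $\Aut(F_n)$ says precisely that the canonical unit morphism of Lawvere theories $E\to\Groups$ (whose left adjoint is the free group functor, and which on automorphism groups of free models realises the inclusion of $\Sigma_n$ into $\Aut(F_n)$) induces an equivalence $\bbS\simeq\rmK(E)\xrightarrow{\sim}\rmK(\Groups)$. Consequently, smashing with $\rmK(\bbZ)$ gives $\rmK(\bbZ)\wedge\rmK(\Groups)\simeq\rmK(\bbZ)\wedge\bbS\simeq\rmK(\bbZ)$.

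It remains to identify the assembly map under these two equivalences. By Theorem~\ref{thm:abelian_assembly} it is the unique $\rmK(\bbZ)$-linear extension of $\rmK(\text{lin.})\colon\rmK(\Groups)\to\rmK(\bbZ\otimes\Groups)$. The composition of Lawvere theory morphisms $E\to\Groups\to\bbZ$ is the unit $E\to\bbZ$ of the monoidal Lawvere theory $\bbZ$, so after applying $\rmK$ and using Galatius's equivalence to identify $\rmK(\Groups)\simeq\bbS$, the linearization map becomes the ring spectrum unit $\bbS\to\rmK(\bbZ)$. Its $\rmK(\bbZ)$-linear extension is the standard right-unit isomorphism $\rmK(\bbZ)\wedge\bbS\simeq\rmK(\bbZ)$, which is the desired equivalence. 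The main obstacle will be the bookkeeping required to verify that Galatius's equivalence is in fact implemented by the unit morphism of Lawvere theories and that the Eckmann--Hilton identification is compatible with the linearization; once these compatibilities are in hand the conclusion follows formally from Theorem~\ref{thm:abelian_assembly} and Galatius's computation.
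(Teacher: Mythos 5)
Your proposal is correct and follows essentially the same route as the paper's proof: Galatius's theorem identifies $\rmK(\Groups)\simeq\bbS$ via the unit, and the Eckmann--Hilton observation that abelian group objects in groups are just abelian groups identifies $\bbZ\otimes\Groups$ with $\bbZ$. The paper states these two facts and stops; your additional bookkeeping via Theorem~\ref{thm:abelian_assembly}, identifying the assembly map with the unit isomorphism $\rmK(\bbZ)\wedge\bbS\simeq\rmK(\bbZ)$, is exactly the implicit final step.
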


\begin{proof}
In the theory of groups, the automorphism groups~$\Aut(\rmF_r)$ are the automorphism groups of the free groups~$\rmF_r$ on~$r$ generators. Galatius~\cite{Galatius} has shown that the algebraic~K-theory space is the infinite loop space underlying the sphere spectrum: the unit~\hbox{$\bbS\to\rmK(\Groups)$} is an equivalence. The result now follows from this and the fact that~(abelian) group objects in the category of groups are just abelian groups. 
\end{proof}

Conversely, we can now also consider assembly-like maps
\begin{equation}\label{eq:groups_assembly}
\rmK(\Groups)\wedge\rmK(T)\longrightarrow\rmK(\Groups\otimes T).
\end{equation}
By Galatius's theorem, this is equivalent to the map~\hbox{$\rmK(T)\to\rmK(\Groups\otimes T)$} of spectra induced by the canonical morphism~$T\to\Groups\otimes T$ in the sense that the obvious triangle commutes.

Moreover, there is an interpolation between the theory of all groups and the theory of all abelian groups by the theories~$\Nil_c$ of nilpotent groups of a certain class~$c$, with~\hbox{$1\leqslant c\leqslant\infty$}. There is a corresponding diagram
\vspace{-20pt}
\[
\xymatrix{
&&\vdots\ar[d]\\
&&\rmK(\Nil_3)\ar[d]\\
&&\rmK(\Nil_2)\ar[d]\\
\bbS\ar@{=}[r]&\rmK(\Groups)\ar[r]\ar[ur]\ar[uur]&\rmK(\Abel)\ar@{=}[r]&\rmK(\bbZ)
}
\]
of algebraic~K-theory spectra. This tower has been studied from the point of view of homological stability and stable homology in~\cite{Szymik:twisted} and~\cite{Szymik:rational}, respectively. We refer to Zeman's recent work~\cite{Zeman} for related questions in a more geometric direction.

The assembly map~\eqref{eq:abelian_assembly} for the theory of abelian groups is {\em not} an equivalence. In fact, more generally, we have the following result.

\begin{theorem}\label{thm:assembly_for_nil}
The assembly map
\begin{equation}\label{eq:assembly_for_nil}
\rmK(\bbZ)\wedge\rmK(\Nil_c)\longrightarrow\rmK(\bbZ\otimes\Nil_c)=\rmK(\bbZ)
\end{equation}
for the theory~$\Nil_c$ of nilpotent groups of any given class~$c\geqslant 1$ is not rationally injective. 
\end{theorem}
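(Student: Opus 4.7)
The plan is to reduce the non-injectivity to a rational rank comparison in degree~$5$, leveraging the failure of injectivity of the ring multiplication on~$\rmK(\bbZ)$.

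First, since abelian groups are precisely nilpotent groups of class one, abelian group objects in any category of nilpotent groups of class~$c$ are simply abelian groups; hence $\bbZ\otimes\Nil_c=\bbZ$, confirming the target~$\rmK(\bbZ)$ of the assembly map. The linearization morphism $\Nil_c\to\bbZ\otimes\Nil_c$ of Lawvere theories is the abelianization $f\colon\Nil_c\to\Abel=\bbZ$, so Theorem~\ref{thm:abelian_assembly} lets me write the assembly map as the composite
\[
\rmK(\bbZ)\sma\rmK(\Nil_c)\xrightarrow{\ \id\sma\rmK(f)\ }\rmK(\bbZ)\sma\rmK(\bbZ)\xrightarrow{\ \mu\ }\rmK(\bbZ),
\]
where~$\mu$ is the ring multiplication coming from the commutative monoidal structure on~$\bbZ$ (that is, the assembly map for~$\Abel$).

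Next, I work rationally in degree~$5$. Borel's theorem gives $\pi_5\rmK(\bbZ)_\bbQ\cong\bbQ$, spanned by a class~$x$, while $\pi_i\rmK(\bbZ)_\bbQ$ vanishes for $1\leqslant i\leqslant 4$. Since~$\rmK(\bbZ)$ is a connective spectrum of finite type, rational Künneth collapses the degree-$5$ part of the source to
\[
\pi_5\bigl(\rmK(\bbZ)\sma\rmK(\Nil_c)\bigr)_\bbQ\;\cong\;\pi_5\rmK(\bbZ)_\bbQ\otimes\pi_0\rmK(\Nil_c)_\bbQ\;\oplus\;\pi_0\rmK(\bbZ)_\bbQ\otimes\pi_5\rmK(\Nil_c)_\bbQ,
\]
a $\bbQ$--space of dimension $1+\dim_\bbQ\pi_5\rmK(\Nil_c)_\bbQ$, while the target has dimension one. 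Hence the assembly map fails to be rationally injective in degree~$5$ as soon as $\pi_5\rmK(\Nil_c)_\bbQ$ is nonzero: writing $f_\ast(y)=\lambda_y\cdot x$ for a chosen nonzero $y\in\pi_5\rmK(\Nil_c)_\bbQ$, the class $1\otimes y-\lambda_y\cdot(x\otimes 1)$ is sent to~$0$ by the assembly map but is itself nonzero, in parallel with the standard kernel element $1\otimes x-x\otimes 1$ of~$\mu$.

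The remaining obstacle is the nonvanishing of $\pi_5\rmK(\Nil_c)_\bbQ$. For~$c=1$ this is Borel's theorem applied to $\Nil_1=\Abel=\bbZ$. For~$c\geqslant 2$ I invoke the rational stable homology computations for the automorphism groups of free nilpotent groups from~\cite{Szymik:rational}, which produce a nontrivial rational class of the required degree. This nonvanishing is the main technical input; once it is in hand, the rank comparison above completes the proof. If desired, the same scheme with degree~$5$ replaced by any degree~$4k+1$ with~$k\geqslant 1$ yields further non-injectivity, at the cost of a stronger input from~\cite{Szymik:rational}.
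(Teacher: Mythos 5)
Your strategy is fundamentally the same as the paper's: both proofs come down to a rational dimension count in which the source of the assembly map acquires, via the K\"unneth isomorphism, a summand $\pi_0\rmK(\bbZ)_\bbQ\otimes\pi_n\rmK(\Nil_c)_\bbQ$ that the target $\pi_n\rmK(\bbZ)_\bbQ$ cannot accommodate, with the nonvanishing of the positive-degree rational homotopy of $\rmK(\Nil_c)$ supplied by \cite{Szymik:rational}. (The paper packages this by base-changing the $\rmK(\bbZ)$--linear assembly map along $\rmK(\bbZ)\to\rmH\bbQ$ and observing that the resulting map $\rmH\bbQ\wedge\rmK(\Nil_c)\to\rmH\bbQ$ cannot be injective on homotopy; your factorization through $\mu\circ(\id\sma\rmK(f))$ is correct by the uniqueness clause of Theorem~\ref{thm:abelian_assembly} but is not actually needed for the rank comparison.) Your preliminary identifications --- $\bbZ\otimes\Nil_c=\bbZ$ via Eckmann--Hilton, the Borel computation of $\pi_*\rmK(\bbZ)_\bbQ$ in degrees $\leqslant 5$ --- are all fine.

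The one substantive issue is your commitment to degree~$5$. For $c\geqslant 2$ your argument requires the \emph{degree-specific} input $\pi_5\rmK(\Nil_c)_\bbQ\neq 0$, which you attribute to \cite{Szymik:rational} without verification; the paper only uses the weaker statement that the rational homology of $\rmK(\Nil_c)$ is nontrivial in \emph{some} positive degree, which is all that reference is invoked for. If the nontrivial classes for $c\geqslant 2$ happened to sit in degrees other than $5$ (or other than $4k+1$), your proof as written would have a gap. The fix is cheap and makes the degree-$5$ choice unnecessary: in any degree $n>0$ with $\pi_n\rmK(\Nil_c)_\bbQ\neq 0$, the summand $\pi_0\rmK(\bbZ)_\bbQ\otimes\pi_n\rmK(\Nil_c)_\bbQ$ already forces
\[
\dim_\bbQ\pi_n\bigl(\rmK(\bbZ)\sma\rmK(\Nil_c)\bigr)_\bbQ\;\geqslant\;\dim_\bbQ\pi_n\rmK(\bbZ)_\bbQ+\dim_\bbQ\pi_n\rmK(\Nil_c)_\bbQ\;>\;\dim_\bbQ\pi_n\rmK(\bbZ)_\bbQ,
\]
so injectivity fails in degree $n$ regardless of whether $\pi_n\rmK(\bbZ)_\bbQ$ vanishes. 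State the argument in that degree-agnostic form and it coincides with the paper's proof.
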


Therefore, a generalization of the Novikov conjecture to algebraic theories is impossible. 

\begin{proof}
To see this, assume that it is rationally injective, and base change the~$\rmK(\bbZ)$--linear assembly map along the composition~$\rmK(\bbZ)\to\rmH\bbZ\to\rmH\bbQ$ to get a~$\rmH\bbQ$--linear map
\[
\rmH\bbQ\wedge\rmK(\Nil_c)\longrightarrow\rmH\bbQ,
\]
which would then also be rationally injective, contradicting the results in~\cite{Szymik:rational}: the rational homology of~$\rmK(\Nil_c)$ is non-trivial for all positive integers~$c$. 
\end{proof}

\begin{remark}
There is an entire interpolation of assembly-style maps between Loday's assembly map~\eqref{eq:abelian_assembly} and the map~\eqref{eq:groups_assembly}: there is a tower
\begin{equation}
\rmK(\Nil_c)\wedge\rmK(T)\longrightarrow\rmK(\Nil_c\otimes T)
\end{equation}
of maps of spectra, indexed by the integer~$c\geqslant1$. Note how this differs from~\eqref{eq:assembly_for_nil} in the way the theory~$\Nil_c$ enters. At the time of writing, it is not known to us whether the tower of spectra~$\rmK(\Nil_c)$ converges~(as~\hbox{$c\to\infty$}) to the spectrum~$\rmK(\Groups)\simeq\bbS$ or not. More generally, one may wonder whether or not the tower~\hbox{$\rmK(\Nil_c)\wedge\rmK(T)$} converges to~\hbox{$\rmK(\Groups)\wedge\rmK(T)$}, or whether or not the tower~$\rmK(\Nil_c\otimes T)$ converges to~$\rmK(\Groups\otimes T)$. It would be interesting to pursue the question for which Lawvere theories~$T$ one or both of these is the case.
\end{remark}

%%%

\section*{Acknowledgments}

We thank the referee for several helpful and clarifying suggestions. The authors would like to thank the Isaac Newton Institute for Mathematical Sciences, Cambridge, for support and hospitality during the program `Homotopy harnessing higher structures' where work on this paper was undertaken. This work was supported by~EPSRC grant no~EP/K032208/1. The first author was partially supported by the National Science Foundation under DMS Grants Nos.~1710534 and 2104300.

Competing interests: The authors declare none.
%%%

%%%

\vfill

Department of Mathematics, Vanderbilt University, 1326 Stevenson Center, Nashville, TN, USA\\
\href{mailto:am.bohmann@vanderbilt.edu}{am.bohmann@vanderbilt.edu}

Department of Mathematical Sciences, NTNU Norwegian University of Science and Technology, 7491 Trondheim, NORWAY\\
\href{mailto:markus.szymik@ntnu.no}{markus.szymik@ntnu.no}

School of Mathematics and Statistics, The University of Sheffield, Sheffield S3 7RH, UNITED KINGDOM,\\
\href{mailto:m.szymik@sheffield.ac.uk}{m.szymik@sheffield.ac.uk}


\begin{thebibliography}{99}%{M-PMN18}

\bibitem[1]{ARV} J. Ad\'amek, J. Rosick\'y, E.M. Vitale. Algebraic theories. A categorical introduction to general algebra. Cambridge Tracts in Mathematics 184. Cambridge University Press, Cambridge, 2011.

\bibitem[2]{ABC} P. Ara, M. Brustenga, G. Corti\~nas.~K-theory of Leavitt path algebras. M\"unster J. Math. 2 (2009) 5--33.

\bibitem[3]{BDRR13} N.A. Baas, B.I. Dundas, B. Richter, J. Rognes. Ring completion of rig categories. J. Reine Angew. Math. 674 (2013) 43--80.

\bibitem[4]{BO} A.M. Bohmann, A. Osorno. A multiplicative comparison of Waldhausen and Segal K-theory. Math. Z. 295 (2020) 1205--1243.

\bibitem[5]{Bohmann+Szymik} A.M. Bohmann, M. Szymik. Boolean algebras, Morita invariance, and the algebraic K-theory of Lawvere theories. \href{http://arxiv.org/abs/arXiv:2011.11755}{arXiv:2011.11755}

\bibitem[6]{BHM89} M. B\"okstedt, W.C. Hsiang, I. Madsen. The cyclotomic trace and the~K-theoretic analogue of Novikov's conjecture. Proc. Nat. Acad. Sci. U.S.A. 86 (1989) 8607--8609.

\bibitem[7]{BHM93} M. B\"okstedt, W.C. Hsiang, I. Madsen. The cyclotomic trace and algebraic~K-theory of spaces. Invent. Math. 111 (1993) 465--539.

\bibitem[8]{Borceux} F. Borceux. Handbook of categorical algebra. 2. Categories and structures. Encyclopedia of Mathematics and its Applications 51. Cambridge University Press, Cambridge, 1994.
  
\bibitem[9]{Brin} M.G. Brin. Higher dimensional Thompson groups. Geom. Dedicata 108 (2004) 163--192.

\bibitem[10]{Connes+Consani:1} A. Connes, C. Consani. Absolute algebra and Segal's~$\Gamma$--rings: au dessous de~$\overline{\Spec(\bbZ)}$. J. Number Theory 162 (2016) 518--551.

\bibitem[11]{Connes+Consani:2} A. Connes, C. Consani. Segal's gamma rings and universal arithmetic. Q. J. Math. 72 (2021) 1--29.

\bibitem[12]{Davis} J.F. Davis. Manifold aspects of the Novikov conjecture. Surveys on surgery theory, Vol. 1, 195--224. Ann. of Math. Stud. 145. Princeton Univ. Press, Princeton, NJ, 2000. 

\bibitem[13]{Davis+Lueck} J.F. Davis, W. L\"uck. Spaces over a category and assembly maps in isomorphism conjectures in~K- and L-theory.~K-Theory 15 (1998) 201--252.

\bibitem[14]{DM-P} W. Dicks, C. Mart\'inez-P\'erez. Isomorphisms of Brin--Higman--Thompson groups. Israel J. Math. 199 (2014) 189--218.

\bibitem[15]{Durov} N. Durov. New Approach to Arakelov Geometry. Preprint.
\href{http://arxiv.org/abs/arXiv:0704.2030}{arXiv:0704.2030}

\bibitem[16]{EM1} A.D. Elmendorf, M.A. Mandell. Rings, modules, and algebras in infinite loop space theory. Adv. Math. 205 (2006) 163--228.

\bibitem[17]{EM2} A.D. Elmendorf, M.A. Mandell. Permutative categories, multicategories and algebraic~K-theory. Alg. Geom. Top. 9 (2009) 2391--2441.  

\bibitem[18]{Farrell+Jones} F.T. Farrell, L.E. Jones. Isomorphism conjectures in algebraic K-theory. J. Amer. Math. Soc. 6 (1993) 249--297.
  
\bibitem[19]{Freyd} P. Freyd. Algebra valued functors in general and tensor products in particular. Colloq. Math. 14 (1966) 89--106. 

\bibitem[20]{FN} L. Funar, Y. Neretin. Diffeomorphism groups of tame Cantor sets and Thompson-like groups. Compos. Math. 154 (2018) 1066--1110. 

\bibitem[21]{Galatius} S. Galatius. Stable homology of automorphism groups of free groups. Ann. of Math. 173 (2011) 705--768.

\bibitem[22]{Grayson} D. Grayson. Higher algebraic~K-theory II (after Daniel Quillen). Algebraic~K-theory (Proc. Conf., Northwestern Univ., Evanston, Ill., 1976) 217--240. Lecture Notes in Math. 551. Springer, Berlin, 1976.

\bibitem[23]{Hambleton+Pedersen} I. Hambleton, E.K. Pedersen. Identifying assembly maps in~K- and L-theory. Math. Ann. 328 (2004) 27--57. 

\bibitem[24]{HylandPower2002}  M. Hyland, J. Power. Pseudo-commutative monads and pseu\-do-closed~$2$--categories. J. Pure Appl. Algebra 175 (2002) 141--185.

\bibitem[25]{HylandPower2007} M. Hyland, J. Power. The category theoretic understanding of universal algebra: Lawvere theories and monads. Computation, meaning, and logic: articles dedicated to Gordon Plotkin, 437--458. Electron. Notes Theor. Comput. Sci., 172. Elsevier Sci. B. V., Amsterdam, 2007.

\bibitem[26]{JY} N. Johnson, D. Yau. Bimonoidal Categories, {E$_n$}--Monoidal Categories, and Algebraic~K-theory, Volume III. Preprint, 9 November 2021. \href{https://nilesjohnson.net/En-monoidal}{https://nilesjohnson.net/En-monoidal}

\bibitem[27]{Kock1970} A.~Kock. Monads on symmetric monoidal closed categories. Arch. Math. (Basel) 21 (1970) 1--10. 
 
\bibitem[28]{Kock1971} A.~Kock. Closed categories generated by commutative monads. J. Austral. Math. Soc. 12 (1971) 405--424.
 
\bibitem[29]{Kock1972} A.~Kock. Strong functors and monoidal monads. Arch. Math. (Basel) 23 (1972) 113--120.

\bibitem[30]{Lawvere:PNAS} F.W. Lawvere. Functorial semantics of algebraic theories. Proc. Nat. Acad. Sci. U.S.A. 50 (1963) 869--872.

\bibitem[31]{Lawvere:tensor} F.W. Lawvere. Some algebraic problems in the context of functorial semantics of algebraic theories. 1968 Reports of the Midwest Category Seminar. II, 41--61. Springer, Berlin, 1968. 

\bibitem[32]{Loday} J.-L. Loday.~K-th\'eorie alg\'ebrique et repr\'esentations de groupes. Ann. Sci. \'Ecole Norm. Sup. 9 (1976) 309--377.

\bibitem[33]{Lueck} W. L\"uck. Assembly maps. Handbook of Homotopy Theory, 851--890. H. Miller (editor). CRC Press, 2019.

\bibitem[34]{Mac Lane} S. MacLane. Categories for the working mathematician. Graduate Texts in Mathematics, 5. Springer-Verlag, New York-Berlin, 1971.

\bibitem[35]{M-PMN} C. Mart\'inez-P\'erez, F. Matucci, B. Nucinkis. Presentations of generalisations of Thompson's group V.  Pacific J. Math. 296 (2018) 371--403.

\bibitem[36]{M-PN}C. Mart\'inez-P\'erez, B. Nucinkis. Bredon cohomological finiteness conditions for generalisations of Thompson groups. Groups Geom. Dyn. 7 (2013) 931--959.  

\bibitem[37]{Pareigis} B. Pareigis.~Kategorien und Funktoren. Mathematische Leit\-f\"a\-den. B.G. Teubner, Stuttgart, 1969.

\bibitem[38]{Quinn:Thesis} F. Quinn. A geometric formulation of surgery. Topology of Manifolds (Proc. Inst., Univ. of Georgia, Athens, Ga., 1969) 500--511. Markham, Chicago, Ill., 1970. 

\bibitem[39]{Quinn} F. Quinn. Ends of maps. II. Invent. Math. 68 (1982) 353--424.

\bibitem[40]{Quinn:Oberwolfach} F. Quinn. Assembly maps in bordism-type theories. Novikov conjectures, index theorems and rigidity, Vol. 1 (Oberwolfach, 1993) 201--271. London Math. Soc. Lecture Note Ser. 226. Cambridge Univ. Press, Cambridge, 1995. 

\bibitem[41]{Ranicki} A. Ranicki. The total surgery obstruction. Algebraic topology, Aarhus 1978 (Proc. Sympos., Univ. Aarhus, Aarhus, 1978) 275--316. Lecture Notes in Math. 763. Springer, Berlin, 1979. 

\bibitem[42]{Schubert} H. Schubert.~Kategorien. II. Heidelberger Taschenb\"ucher 66. Springer-Verlag, 1970.

\bibitem[43]{Segal} G. Segal. Categories and cohomology theories. Topology 13 (1974) 293--312.

\bibitem[44]{Shafarevich} I.R. Shafarevich. Abelian and nonabelian mathematics. Math. Intelligencer 13 (1991) 67--75. 
  
\bibitem[45]{Sperber} R. Sperber. Comparing assembly maps in algebraic~K-theory. J.~K-Theory 7 (2011) 145--168. 

\bibitem[46]{Szymik:twisted} M. Szymik. Twisted homological stability for extensions and automorphism groups of free nilpotent groups. J.~K-Theory 14 (2014) 185--201.

\bibitem[47]{Szymik:rational} M. Szymik. The rational stable homology of mapping class groups of universal nil-manifolds. Ann. Inst. Fourier 69 (2019) 783--803.

\bibitem[48]{Szymik+Wahl} M. Szymik, N. Wahl. The homology of the Higman--Thompson groups. Invent. Math. 216 (2019) 445--518.

\bibitem[49]{Waldhausen} F. Waldhausen. Algebraic K-theory of generalized free products. III, IV. Ann. of Math. 108 (1978) 205--256. 

\bibitem[50]{Weiss+Williams} M. Weiss, B. Williams. Assembly. Novikov conjectures, index theorems and rigidity, Vol. 2 (Oberwolfach, 1993) 332--352. London Math. Soc. Lecture Note Ser. 227. Cambridge Univ. Press, Cambridge, 1995. 

\bibitem[51]{Wraith} G.C. Wraith. Algebraic Theories. Lectures, Autumn 1969. Lecture Notes Series 22. Aarhus Universitet, Matematisk Institut, Aarhus, 1970.

\bibitem[52]{Zeman} T. Zeman. On the quotients of mapping class groups of surfaces by the Johnson subgroups. Math. Proc. Cambridge Philos. Soc. 170 (2021) 355--377.

\end{thebibliography}
\end{document}